\newtheorem{theorem}{Theorem}[section]
\newtheorem{proposition}[theorem]{Proposition}
\newtheorem{lemma}[theorem]{Lemma}
\theoremstyle{plain}
\theoremstyle{remark}
\newtheorem{remark}[theorem]{Remark}
\newcommand{\C}{{\mathbb C}}
\newcommand{\Q}{{\mathbb Q}}
\newcommand{\N}{{\mathbb N}}
\newcommand{\Kbar}{\overline{K}}
\newcommand{\Qbar}{\bar{\Q}}
\DeclareMathOperator{\Gal}{Gal}
\newcommand{\bP}{{\mathbb P}}
\newcommand{\bG}{{\mathbb G}}
\newcommand{\bC}{{\mathbb C}}
\newcommand{\bA}{{\mathbb A}}
\newcommand{\lra}{\longrightarrow}
\newcommand{\cL}{\mathcal{L}}
\newcommand\cLbar {\mathcal{\overline{L}}}
\newcommand\OK {\Omega_K}
\renewcommand\O{{\mathcal O}}
\renewcommand\P{{\mathbb P}}
\newcommand{\hhat}{{\widehat h}}
\author{D.~Ghioca}
\address{
Dragos Ghioca\\
Department of Mathematics\\
University of British Columbia\\
Vancouver, BC V6T 1Z2\\
Canada
}
\email{dghioca@math.ubc.ca}
\author{K.~D.~Nguyen}
\address{
Khoa D.~Nguyen \\
Department of Mathematics\\
University of British Columbia\\
And Pacific Institute for The Mathematical Sciences\\ 
Vancouver, BC V6T 1Z2, Canada}
\email{dknguyen@math.ubc.ca}
\author{H.~Ye}
\address{
Hexi Ye\\
Department of Mathematics\\
University of British Columbia\\
Vancouver, BC V6T 1Z2\\
Canada
}
\email{yehexi@math.ubc.ca}
\keywords{Dynamical Manin-Mumford Conjecture, equidistribution of points of small height, symmetries of the Julia set of a rational function}
\subjclass[2010]{Primary: 37P05. Secondary: 37P30}
\begin{document}
	\title[Dynamical Manin-Mumford Conjecture]{The Dynamical Manin-Mumford Conjecture and the Dynamical Bogomolov Conjecture for split rational maps}
	
	
	\begin{abstract}
	We prove the Dynamical Bogomolov Conjecture for endomorphisms  $\Phi:\bP^1\times \bP^1\lra \bP^1\times \bP^1$, where $\Phi(x,y):=(f(x), g(y))$ for any rational functions $f$ and $g$ defined over $\Qbar$. We use the equidistribution theorem for points of small height with respect to an algebraic dynamical system, combined with a theorem of Levin regarding symmetries of the Julia set. Using a specialization theorem of Yuan and Zhang, we can prove the Dynamical Manin-Mumford Conjecture for endomorhisms $\Phi=(f,g)$ of $\bP^1\times \bP^1$, where $f$ and $g$ are rational functions defined over an arbitrary field of characteristic $0$. 
\end{abstract} 
	  
	\maketitle


\section{Introduction}\label{sec:introduction}

We prove the Dynamical Manin-Mumford Conjecture (over $\C$) and the Dynamical Bogomolov Conjecture (over $\Qbar$) for endomorphisms $\Phi$ of $\bP^1\times \bP^1$ (see \cite[Conjectures~1.2.1~and~4.1.7]{ZhangLec}). Actually, we can prove an even stronger result than the one conjectured in \cite{ZhangLec} since we do not assume $\Phi:=(f_1,f_2):\bP^1\times \bP^1\lra \bP^1\times \bP^1$ is necessarily polarizable (i.e., $f_1$ and $f_2$ might have different degrees), but we exclude the case when the $f_i$'s are conjugate to monomials, $\pm$Chebyshev polynomials, or Latt\`es maps since in those cases there are counterexamples to a formulation which does not ask that $\deg(f_1)=\deg(f_2)$ (see Remark~\ref{discussion DMM}). In Theorem~\ref{general DMM result} we prove the appropriately modified statement of the Dynamical Manin-Mumford Conjecture for all polarizable endomorphisms of $\bP^1\times \bP^1$.

First, we recall that the Chebyshev polynomial of degree $d$ is the unique polynomial $T_d$ with the property that for each $z\in\C$, we have $T_d(z+1/z) = z^d + 1/z^d$. Similarly, a Latt\`es map $f:\bP^1\lra \bP^1$ is a rational function coming from the quotient of an affine map $L(z)=az+b$ on a torus $\mathcal{T}$ (elliptic curve), i.e. $f=\Theta\circ L\circ \Theta^{-1}$ with $\Theta: \mathcal{T}\to \P^1$ a finite-to-one holomorphic map; see \cite{lattes} by Milnor. For two rational functions $f$ and $g$, we say they are  (linearly) conjugate if there exists an automorphism $\eta$ of $\bP^1$ such that $f = \eta^{-1}\circ g\circ \eta$; we note that, by definition, the class of Latt\`es maps is invariant under conjugation. Any rational map of degree $d>1$ which is conjugate either to $z^{\pm d}$, or to $\pm T_d(z)$, or to a Latt\`es map is called \emph{exceptional}. Finally, we note that two polynomials $f$ and $g$ are linearly conjugate if there exists a linear polynomial $\eta$ such that $f=\eta^{-1}\circ g\circ \eta$.

\begin{theorem}
\label{main result}
Let $f_1$ and $f_2$ be rational functions and let $C\subset \bP^1\times \bP^1$ be an irreducible curve defined over $\C$ which projects dominantly onto both coordinates. Assume $d_i:=\deg(f_i) >1$ for $i=1,2$, and also that for some $i=1,2$ we have that $f_i(z)$ is not exceptional.  Then $C$ contains infinitely many points $(x_1,x_2)$ such that $x_i$ is a preperiodic point for $f_i$ for $i=1,2$ if and only if there exist positive integers $\ell_1$ and $\ell_2$ such that the following two conditions are met:
\begin{enumerate}
\item[(i)] $d_1^{\ell_1}=d_2^{\ell_2}$; and 
\item[(ii)] $C$ is preperiodic under the action of $(x,y)\mapsto (f_1^{\ell_1}(x), f_2^{\ell_2}(y))$ on $\bP^1\times \bP^1$.
\end{enumerate}
Moreover, if $f_1$, $f_2$ and $C$ are defined over $\Qbar$, then there exist positive integers $\ell_1$ and $\ell_2$ such that conditions~(i)-(ii) are met if and only if there exists an infinite sequence of points $(x_{1,n},x_{2,n})\in C(\Qbar)$ such that $\lim_{n\to\infty}\hhat_{f_1}(x_{1,n})+\hhat_{f_2}(x_{2,n})=0$, where $\hhat_{f_i}$ is the canonical height with respect to the rational function $f_i$. 
\end{theorem}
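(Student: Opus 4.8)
The plan is to split the argument into the three directions hidden in the ``if and only if'' chain. The dynamical content — showing that a curve with infinitely many preperiodic points must be preperiodic under an iterate of $(f_1,f_2)$, together with the constraint $d_1^{\ell_1}=d_2^{\ell_2}$ — is where the hypothesis that some $f_i$ is not exceptional is essential, and I expect that to be the heart of the matter; the two ``easy'' implications $(\text{preperiodic curve})\Rightarrow(\text{infinitely many preperiodic points})$ and $(\text{infinitely many small-height points})\Rightarrow(\text{preperiodic curve over }\Qbar)$ are comparatively routine and I would dispatch them first.

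First I would handle the elementary direction. Assume (i)--(ii), so $C$ is preperiodic under $\Psi(x,y):=(f_1^{\ell_1}(x),f_2^{\ell_2}(y))$, say $\Psi^{a+b}(C)=\Psi^a(C)$. Since $C$ projects dominantly onto each factor, each of the finitely many curves in the forward orbit of $C$ still does, and on each such curve the infinitely many points of the form $(x_1,x_2)$ with $x_i$ preperiodic for $f_i$ pull back to infinitely many such points on $C$ — concretely, take $x_1$ ranging over the (infinite) set of preperiodic points of $f_1$ lying in the image of a periodic curve $\Psi^a(C)$, lift to a point of $\Psi^a(C)$, and pull back under $\Psi^a$; preperiodicity is preserved under $f_i$ and its ``inverse images'', so the lift is again of the required form. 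This also shows, under (i), why $d_1^{\ell_1}=d_2^{\ell_2}$ is forced by (ii) in the other direction: the common degree $d:=d_1^{\ell_1}=d_2^{\ell_2}$ makes $\Psi$ polarizable with respect to $\O(1)\boxtimes\O(1)$, which is exactly what lets us invoke the height machinery. For the height direction over $\Qbar$, if (i)--(ii) hold then the canonical height $\hhat_{f_1}(x)+\hhat_{f_2}(y)$ restricted to (any curve in the orbit of) $C$ has infinitely many points of height zero, namely the preperiodic points produced above, so a fortiori there is a sequence with $\hhat_{f_1}(x_{1,n})+\hhat_{f_2}(x_{2,n})\to 0$.

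The substantial direction is: $C$ has infinitely many points $(x_1,x_2)$ with each $x_i$ preperiodic $\Rightarrow$ (i)--(ii); and, over $\Qbar$, the same conclusion from merely a small-height sequence. Here is the route. Equip $\bP^1\times\bP^1$ with the adelic metrized line bundle $\overline{\cL}:=\pi_1^*\overline{\cL}_{f_1}\otimes\pi_2^*\overline{\cL}_{f_2}$, where $\overline{\cL}_{f_i}$ is the canonical (Call--Silverman) metrized bundle for $f_i$, so that $\hhat_{\overline{\cL}}=\hhat_{f_1}\circ\pi_1+\hhat_{f_2}\circ\pi_2\ge 0$ and vanishes exactly on the ``preperiodic $\times$ preperiodic'' set. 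A sequence of points of $C$ with $\hhat_{\overline{\cL}}\to 0$ is a small sequence that is generic (since $C$ is irreducible and not a point, only finitely many of the $(x_{1,n},x_{2,n})$ can lie on any proper subvariety, after passing to a subsequence), so the equidistribution theorem of Yuan (the version for a metrized line bundle on a curve) applies: the Galois orbits of the $(x_{1,n},x_{2,n})$ equidistribute, at each place $v$, with respect to the curvature measure $\mu_{C,v}$ coming from $\overline{\cL}|_C$. Now run this at two places cleverly. Projecting to the first coordinate, $\pi_{1*}\mu_{C,v}$ is a multiple of the equilibrium measure $\mu_{f_1,v}$ on $\bP^1_v$ (the support being the Julia set $J(f_1)$ at $v=\infty$, or its Berkovich analogue); comparing the two projections at an archimedean place, the pushforward measures $\mu_{f_1}$ and $\mu_{f_2}$ are related by the (finite, generically finite) correspondence $C\subset\bP^1\times\bP^1$. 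This forces $C$ to be contained in the locus of a symmetry relating $J(f_1)$ and $J(f_2)$. Invoking Levin's theorem on the symmetries of Julia sets — which describes, for $f$ not exceptional, all $\eta$ with $\eta_*\mu_f=\mu_f$ or more generally all algebraic relations between $\mu_{f_1}$ and $\mu_{f_2}$ of the type produced by a correspondence — we conclude that $f_1$ and $f_2$ are, up to the linear changes absorbing $C$, iterates of a common map; this simultaneously yields the degree relation $d_1^{\ell_1}=d_2^{\ell_2}$ in (i) and the invariance statement that upgrades to the preperiodicity of $C$ under $(f_1^{\ell_1},f_2^{\ell_2})$ in (ii). The non-exceptionality hypothesis enters precisely here: for exceptional $f_i$ the symmetry group of $J(f_i)$ is too large (e.g. $J(z^d)$ is a circle, invariant under all rotations), Levin's classification fails, and indeed Remark~\ref{discussion DMM} records genuine counterexamples.

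The main obstacle, then, is the passage from ``equidistribution forces a measure-theoretic relation between $\mu_{f_1}$ and $\mu_{f_2}$ along $C$'' to ``$C$ is dynamically preperiodic''. Two technical points need care. First, one must run the equidistribution at more than one place — a single archimedean place only pins down $C$ up to the Julia-set symmetry, and one uses a non-archimedean place (or the product-formula rigidity across all places simultaneously) to rule out the ``extra'' continuous symmetries that would otherwise appear and to promote the analytic relation to an algebraic one over $\Qbar$. Second, one has to descend from $\Qbar$ to $\C$: a curve $C$ over $\C$ with infinitely many preperiodic points is defined over a finitely generated field, and here I would invoke the specialization theorem of Yuan--Zhang (as the abstract promises) to reduce to the number-field case, where the height machinery lives — checking that the specializations preserve ``infinitely many preperiodic points'' and that non-exceptionality is preserved under specialization away from a thin set. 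Assembling these, the forward implication and both directions of the height equivalence follow.
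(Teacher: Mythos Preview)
Your overall architecture---equidistribution to obtain a measure relation on $C$, Levin to rigidify, Yuan--Zhang specialization to pass from $\C$ to $\Qbar$---matches the paper's. But the step you label ``the main obstacle'' is where the real work is, and your sketch of it does not go through as written.

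Concretely: from equidistribution at the archimedean place you get $\tilde\mu_1=\tilde\mu_2$ on $C$, where $\tilde\mu_i=\pi_i^*\mu_{f_i}/\deg(\pi_i)$. You then write that Levin's theorem lets you ``conclude that $f_1$ and $f_2$ are, up to the linear changes absorbing $C$, iterates of a common map.'' That is not what Levin's theorem says, and it is not what is proved. Levin's result is a \emph{local} finiteness statement: for $f$ not exceptional, there is no infinite nontrivial normal family of local holomorphic symmetries of $J_f$ near a point of $J_f$. Nothing in that statement produces a global linear conjugacy between $f_1$ and $f_2$, and indeed $C$ is in general highly nonlinear, so no ``linear changes absorbing $C$'' exist. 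What the paper actually does is pick a point $(x_0,y_0)\in C$ with $x_0$ a repelling periodic point of $f_1$ and $\pi_1$ unramified there, so that $C$ is locally the graph of a germ $h$. One then shows (using $\tilde\mu_1=\tilde\mu_2$) that $y_0$ is also repelling for $f_2$, linearizes both maps, and manufactures a normal family of local symmetries $f_1^{n_\ell}\circ(h^{-1}g_2^{}h)^{-m_\ell}$ of $J_{f_1}$ near $x_0$ with multipliers tending to $1$. Levin's finiteness forces two of these to coincide, which yields an identity of germs $f_1^{n}=h^{-1}\circ f_2^{m}\circ h$ at $x_0$; analytic continuation of this germ identity then shows that $(f_1^n,f_2^m)$ fixes $C$. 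The degree equality $d_1^n=d_2^m$ drops out of the same local measure computation. This local-to-global germ argument is the genuine new idea, and your proposal does not contain it.

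Two smaller points. First, your remark that ``one must run the equidistribution at more than one place'' and use a non-archimedean place to kill continuous symmetries is off: the paper uses only the archimedean place, and the non-exceptionality hypothesis plus Levin's local finiteness is already enough to rule out the extra symmetries. Second, in the reduction from $\C$ to $\Qbar$ the paper does not argue that specializations preserve ``infinitely many preperiodic points'' and then rerun the whole proof; rather, Yuan--Zhang is used to show that $\tilde\mu_{1,t}=\tilde\mu_{2,t}$ for all $t$ in a dense set of $\Qbar$-points of the parameter base, and then one passes to the limit by continuity of the measures in the parameter to get $\tilde\mu_1=\tilde\mu_2$ at the original complex point---after which the germ argument above applies directly over $\C$.
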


Motivated by the classical Manin-Mumford conjecture (proved by Raynaud \cite{Raynaud} in the case of abelian varieties and by McQuillan \cite{McQuillan} in the general case of semiabelian varieties) and also by the classical Bogomolov conjecture (proved by Ullmo \cite{Ullmo} in the case of curves embedded in their Jacobian and by Zhang \cite{Zhang:Bogomolov} in the general case of abelian varieties), Zhang formulated dynamical analogues of both conjectures (see \cite[Conjecture 1.2.1, Conjecture 4.1.7]{ZhangLec}) for polarizable endomorphisms of any projective variety. We say that an endomorphism $\Phi$ of a projective variety $X$ is \emph{polarizable} if there exists an ample line bundle $\cL$ on $X$ such that $\Phi^*\cL$ is linearly equivalent to $\cL^{\otimes d}$ for some integer $d>1$. As initially conjectured by Zhang, one might expect that if $X$ is defined over a field $K$ of characteristic $0$ and $\Phi$ is a polarizable endomorphism of $X$, then a  subvariety $V\subseteq X$ contains a Zariski dense set of preperiodic points if and only if $V$ is preperiodic. We say that an irreducible subvariety $Y\subseteq  X$ is \emph{preperiodic} if $\Phi^m(Y)=\Phi^n(Y)$ for some integers $n>m\ge 0$; if $m=0$, then we say that $Y$ is \emph{periodic}. Furthermore if $K$ is a number field then one can construct the canonical height $\hhat_\Phi$  for all points in $X(\Qbar)$ with respect to the action of $\Phi$ (see \cite{C-S}) and then Zhang's dynamical version of the Bogomolov Conjecture asks that if a subvariety $V\subseteq X$ is not preperiodic, then there exists $\epsilon>0$ with the property that the set of points $x\in V(\Qbar)$  such that $\hhat_\Phi(x)<\epsilon$ is not Zariski dense in $V$. Since all preperiodic points have canonical height equal to $0$, the Dynamical Bogomolov Conjecture is a generalization of the Dynamical Manin-Mumford Conjecture when the algebraic dynamical system $(X,\Phi)$ is defined over a number field.

Besides the case of abelian varieties $X$ endowed with the multiplication-by-$2$ map $\Phi$ (which motivated Zhang's conjectures), there are known only a handful of special cases of the Dynamical Manin-Mumford or the Dynamical Bogomolov conjectures. All of these partial results are for endomorphisms of $\bP^1\times \bP^1$---see \cite{Baker-Hsia, GT-Bogomolov, GTZ} (also see \cite{Favre-Dujardin} for a proof of a version of the Dynamical Manin-Mumford Conjecture in the context of polynomial automorphisms of $\bA^2$). With the exception of \cite{Favre-Dujardin}, all of the above cited papers deal only with the case of linear subvarieties $V$ of $\bP^1\times \bP^1$. Our Theorem~\ref{main result} settles the Dynamical Manin-Mumford and the Dynamical Bogomolov conjectures for all subvarieties of $\bP^1\times \bP^1$. 

\begin{remark}
\label{discussion DMM}
A direct computation shows that any dominant endomorphism of $\P^1\times\P^1$ is always of the form  $(x_1,x_2)\mapsto (f_1(x_1), f_2(x_2))$ (up to a switch of $f_1(x_1)$ and $f_2(x_2)$) where $f_i$ are rational functions for $i=1,2$. On the other hand, for any dominant endomorphism $\Phi$ of $\bP^1\times \bP^1$, we have that $\Phi^2$ is indeed of the form $(x_1,x_2)\mapsto (f_1(x_1), f_2(x_2))$. Since  $\Phi$ and $\Phi^2$ share the same set of preperiodic points (and also the same set of points of small height since they induce the same dynamical system on $\bP^1\times \bP^1$), then we can restrict our attention only to endomorphisms of the form $(x_1,x_2)\mapsto (f_1(x_1), f_2(x_2))$. Furthermore, such an endomorphism is polarizable if and only if $f_1$ and $f_2$ have the same degree $d>1$. 

In the case when the maps $f_i$ are both  conjugate to monomials or $\pm$Chebyshev polynomials, then there are counterexamples to a Dynamical Manin-Mumford statement unless $\deg(f_1)=\deg(f_2)$. In the case the maps $f_i$ are both  Latt\`es maps, there are counterexamples to the original formulation of the Dynamical Manin-Mumford Conjecture even when $\deg(f_1)=\deg(f_2)$---see \cite{GTZ} and the subsequent reformulation of the Dynamical Manin-Mumford Conjecture from \cite[Conjecture~2.4]{GTZ} which takes into account those counterexamples. Intuitively, the problem with any exceptional rational map $f$ lies with the fact that there are \emph{more than usual} maps $g$ commuting with $f$; this occurs since the dynamical system $(\bP^1, f)$ is covered in the three cases of an exceptional rational map by another dynamical system $(G, \Phi)$, where $G$ is an algebraic group ($\bG_m$ in the case $f$ is a monomial or a $\pm$Chebyshev polynomial, and an elliptic curve when $f$ is a Latt\`es map), while $\Phi$ is an endomorphism of $G$.  However, when the maps $f_i$ are not exceptional, then one expected that the condition on the degrees of the maps $f_i$ might be weakened---this is exactly what we prove in our Theorem~\ref{main result}.
\end{remark}

Using Theorem~\ref{main result}, we prove \cite[Conjecture~2.4]{GTZ} for all endomorphisms of $\bP^1\times \bP^1$.

\begin{theorem}
\label{general DMM result}
Let $f_1$ and $f_2$ be rational functions of degree $d>1$ defined over $\C$, let $\Phi:\bP^1\times \bP^1\lra \bP^1\times \bP^1$ be defined by $\Phi(x_1,x_2)=(f_1(x_1), f_2(x_2))$, and let $C\subset \bP^1\times \bP^1$ be an irreducible curve. Then $C$ is preperiodic under the action of $\Phi$ if and only if there exist infinitely many smooth points $P=(x_1,x_2)\in C$ which are preperiodic under $\Phi$ and moreover such that the tangent space of $C$ at $P$ is preperiodic under the induced action of $\Phi$ on  ${\rm Gr}_1\left(T_{\bP^1\times \bP^1,P}\right)$, where $T_{\bP^1\times \bP^1, P}$ is the tangent space of $\bP^1\times \bP^1$ at $P$.
\end{theorem}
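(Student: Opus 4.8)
The plan is to deduce Theorem~\ref{general DMM result} from Theorem~\ref{main result} (which covers the non-exceptional range) together with the classical Manin--Mumford theorem for semiabelian varieties (for the exceptional range), isolating the single case where the tangent-space hypothesis is genuinely needed. For the ``only if'' direction I would first reduce to the case that $C$ is periodic: if $C$ is preperiodic then $C':=\Phi^m(C)$ is periodic for some $m$, and any smooth preperiodic point of $C'$ with preperiodic tangent pulls back along the finite map $\Phi^m|_C\colon C\to C'$ to such a point of $C$ at all but finitely many places (those where $C$ is singular, or $\Phi^m|_C$ ramifies, or the forward orbit meets the critical locus). So it suffices to show that a periodic curve $C'$ carries infinitely many smooth preperiodic points with preperiodic tangent. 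When $C'$ is a fibre $\{a\}\times\P^1$ or $\P^1\times\{b\}$ this is clear: the fibre direction is $\Phi$-fixed in the Grassmannian bundle, and the induced degree-$d^k$ self-map of the $\P^1$-factor has infinitely many periodic points. When $C'$ dominates both factors, comparing degrees in $\pi_1\circ(\Phi^k|_{C'})=f_1^k\circ\pi_1$ shows $\Phi^k|_{C'}$ has degree $d^k>1$; hence the normalization of $C'$ has genus $\le 1$ and $\Phi^k|_{C'}$ has infinitely many periodic points, all but finitely many smooth on $C'$ and with cycle avoiding the critical locus, and at such a point $P_0$ the differential of the iterate of $\Phi$ fixing both $C'$ and $P_0$ has $T_{C',P_0}$ as an eigenline, so the tangent is periodic.

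For the ``if'' direction, suppose $C$ carries infinitely many smooth preperiodic points $P_n=(x_{1,n},x_{2,n})$ with preperiodic tangent. If $C$ does not dominate one of the factors, then $C=\{a\}\times\P^1$ (say) with $a$ preperiodic for $f_1$, so $C$ is preperiodic. If $C$ dominates both factors and some $f_i$ is not exceptional, then Theorem~\ref{main result} applies directly: the $P_n$ witness infinitely many points with both coordinates preperiodic, so there exist $\ell_1,\ell_2$ with $d^{\ell_1}=d^{\ell_2}$ (forcing $\ell_1=\ell_2=:\ell$) and $C$ preperiodic under $(f_1^{\ell},f_2^{\ell})=\Phi^{\ell}$, hence under $\Phi$; the tangent hypothesis is not used here.

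The remaining case---$C$ dominating both factors, with $f_1$ and $f_2$ both exceptional---is the heart of the matter and the only place the tangent hypothesis is essential. Here I would use the standard covering $\sigma\colon G\to\P^1\times\P^1$, where $G$ is $\Gm^2$, an abelian surface $E_1\times E_2$, or a semiabelian variety $\Gm\times E$ according to the types of $f_1$ and $f_2$, with $\sigma\circ\tilde\Phi=\Phi\circ\sigma$ for $\tilde\Phi=\tau_\beta\circ\rho$, where $\rho$ is a group endomorphism of $G$ of the block form dictated by $(f_1,f_2)$, possibly followed by a translation $\tau_\beta$. Since the linearization of $\tilde\Phi$ has all eigenvalues of absolute value $>1$, $\tilde\Phi$ has a fixed point $\delta_0$ and its preperiodic points are exactly $\delta_0+G_{\mathrm{tors}}$; hence $\sigma^{-1}(C)$ contains infinitely many such points, and by Manin--Mumford for semiabelian varieties \cite{McQuillan} a component of $\sigma^{-1}(C)$---which, $C$ being irreducible, surjects onto $C$---is a translate $\delta+B$ of a one-dimensional connected algebraic subgroup $B\subseteq G$, so $C=\sigma(\delta+B)$. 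In the semiabelian case the only one-dimensional connected subgroups are $\Gm\times\{0\}$ and $\{0\}\times E$, each of which projects to a point in one factor, contradicting dominance; so $G$ is $\Gm^2$ or $E_1\times E_2$. Then $\Phi^k(C)=\sigma\bigl(\tilde\Phi^k(\delta)+\rho^k(B)\bigr)$ with $\tilde\Phi^k(\delta)$ taking finitely many values, so $C$ is preperiodic if and only if the orbit $\{\rho^k(B)\}_k$ is finite. Finally, the tangent line of $C$ at the image of a point of $\delta+B$ is $d\sigma$ applied to $\mathrm{Lie}(B)$, and $\Phi$ carries it to $d\sigma$ applied to $(d\rho)(\mathrm{Lie}\,B)$; since $\sigma$ respects the product decomposition of tangent spaces up to scalars taking finitely many values along a finite orbit, preperiodicity of the tangent of $C$ at the $P_n$ forces the line $[\mathrm{Lie}\,B]\in\P(\mathrm{Lie}\,G)\cong\P^1$ to be preperiodic under the diagonal linear map induced by $d\rho$. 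But that map has multipliers of absolute value $>1$ on each eigenline, so its only preperiodic points are its two fixed points, unless it has finite order $N$; the fixed points give $B$ a coordinate subgroup, again contradicting dominance, whereas finite order $N$ makes $(d\rho)^N$ scalar, hence $\rho^N(B)=B$ and $C$ is $\Phi$-preperiodic.

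The main obstacle I anticipate is precisely this last step: setting up the equivariant covering together with its translation part, controlling the deck group so that preperiodicity upstairs and downstairs correspond, and---above all---rigorously converting ``$T_{C,P_n}$ is $\Phi$-preperiodic'' into ``$[\mathrm{Lie}\,B]$ is preperiodic under the linearization of $\rho$'', which requires tracking the ramification of $\sigma$ and the finitely many distinct derivatives of the covering maps appearing along the cycles, as well as invoking the Manin--Mumford theorem in its full semiabelian form rather than merely the toric or abelian one.
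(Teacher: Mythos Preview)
Your proposal is correct and reaches the same conclusion, but the treatment of the exceptional case differs from the paper's. The paper first uses the equal-measures machinery (Proposition~\ref{equal measures proposition}, extended to $\C$ via the specialization argument in the proof of Theorem~\ref{main result}) to show that if one $f_i$ is exceptional of a given type then so is the other; this eliminates your mixed $\Gm\times E$ case without any subgroup analysis. In the monomial/Chebyshev case the paper then lifts to $\Gm^2$ and applies Laurent's Manin--Mumford \cite{Laurent} directly, observing that a torsion coset of a subtorus is automatically preperiodic under the $d$-th power map, so the tangent hypothesis is never touched there (your framework recovers this too, since every one-dimensional subtorus of $\Gm^2$ is fixed by $\rho$). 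In the Latt\`es case the paper does not argue directly but simply lifts the tangent hypothesis to $E_1\times E_2$ and cites \cite[Theorem~2.1]{GTZ}, which is exactly the modified Manin--Mumford statement with the tangent condition for polarizable group endomorphisms of abelian varieties; your $[\mathrm{Lie}\,B]$ analysis is a re-derivation of that result for a curve in $E_1\times E_2$, and the ramification/translation bookkeeping you flag is precisely what \cite{GTZ} absorbs. So you gain self-containment at the cost of reproving a known theorem, while the paper gains modularity by invoking the measure comparison one extra time. For the ``only if'' direction the paper is much terser than you: it just observes that on a preperiodic curve the tangent line at a preperiodic point lies in the finite set $\{T_{\Phi^j(C),\Phi^j(P)}\}_j$.
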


During the final stages of preparing our paper, we learned that independently (using a slightly different approach), Romain Dujardin, Charles Favre and William Gignac proved Theorem~\ref{general DMM result} in the special case when  each $f_i$ is a polynomial.

We also note that using a simple induction argument, one can immediately extend both Theorems~\ref{main result} and \ref{general DMM result} to the case of curves contained in $(\bP^1)^n$ endowed with the coordinatewise action of one-variable rational functions $f_1,\dots, f_n$. The idea is that for any curve $C\subset (\bP^1)^n$, if we let $C_i\subset (\bP^1)^{n-1}$ (for $i=1,\dots, n$) be the image of $C$ under the projection map $\pi_i:(\bP^1)^n\lra (\bP^1)^{n-1}$ on the  $(n-1)$ coordinates of $(\bP^1)^n$ with the exception of its $i$-th coordinate, then $C$ is an irreducible component of the intersection $\cap_i \pi_i^{-1}(C_i)$. The inductive hypothesis yields that each $C_i$ and thus  $\pi^{-1}(C_i)$ must be preperiodic under the action of the $f_i$'s and therefore $C$ is preperiodic.

The idea of our proof is as follows. Using the equidistribution theorem for points of small height on a variety (see \cite{C-L} for the case of curves and \cite{Yuan} for the case of a higher dimensional variety, and also \cite{BR} and \cite{favre-rivera06} for the case of $\bP^1$), we prove that if $C\subset \bP^1\times \bP^1$ contains infinitely many preperiodic points under the action of $(f_1, f_2)$ (or points of small canonical height in the case the dynamical system is defined over $\Qbar$), then the measures $\tilde{\mu}_i$ induced on $C$ by the invariant measures corresponding to the dynamical systems $(\bP^1, f_i)$ are equal. Using a careful study of the local analytic maps which preserve (locally) the Julia set of a rational map which is not Latt\`es, monomial, or Chebyshev polynomial, we obtain the conclusion of Theorem~\ref{main result}. Even though all of the previously known results on the Dynamical Manin-Mumford Conjecture (see \cite{Baker-Hsia, GT-Bogomolov, GTZ}) used the powerful equidistribution theorem, since those results were valid only when $C$ is a line, one always reduced the question to two rational functions which share the same Julia set. The classification of such pair of  rational functions was simple (see \cite{Levin}). In our case, there is a much weaker consequence of the equidistribution that we obtain: for each point $(x_1, x_2)$ on the curve $C$, we have that $x_1$ is preperiodic under the action of $f_1$ if and only if $x_2$ is preperiodic under the action of $f_2$. This consequence was known for quite some time (see \cite{Mimar} which publishes the findings of Mimar's PhD thesis \cite{Mimar-thesis} from almost 20 years ago). The novelty of our approach is the exploit of the local symmetries of the Julia set which allows us to settle completely the Dynamical Manin-Mumford and the Dynamical Bogomolov Conjectures for all endomorphisms of $\bP^1\times \bP^1$.

Theorem~\ref{main result} yields that if a curve $C$ contains infinitely many  points with both coordinates preperiodic under the action of $f_1$, respectively $f_2$, then $C$ must be preperiodic. In turn this yields that there are \emph{transversal} (i.e., not horizontal nor vertical) curves in $\bP^1\times \bP^1$ which are fixed by the action of $(f_1^{m_1},f_2^{m_2})$. Hence, the dynamics of $f_1$ and $f_2$ are \emph{non-orthogonal} (see \cite{Medvedev-Scanlon} for the definition of orthogonality in this context). In other words, $f_1$ and $f_2$ are related dynamically; our next result shows that in the family of unicritical polynomials which are not conjugate to a monomial or to a Chebyshev polynomial, each map is non-orthogonal only to maps which are conjugate to itself. 

\begin{theorem}
\label{theorem example}
Let $d_1, d_2\geq 2$ be integers, let $c_1,c_2\in\C$, let $f_i(z)=z^{d_i}+c_i$ be polynomials such that $c_i\ne 0$ and also $c_i\ne -2$ if $d_i=2$, and let $\Phi$ be the endomorphism of $\bP^1\times \bP^1$ induced by the coordinatewise action of $f_1$ and $f_2$. Then there exists a curve $C\subset \bP^1\times \bP^1$ which projects dominantly onto both coordinates and also such that $C$ contains infinitely many preperiodic points under the action of $\Phi$ if and only if $d_1=d_2$ and there exists a $(d_1-1)$-st root of unity $\zeta$ such that $c_2=\zeta\cdot c_1$.
\end{theorem}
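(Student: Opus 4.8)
The plan is to prove the two implications separately, using Theorem~\ref{main result} together with the structure theory of periodic curves under coordinatewise polynomial maps for the forward direction. For the \emph{``if'' direction}, suppose $d_1=d_2=:d$ and $c_2=\zeta c_1$ for a $(d-1)$-st root of unity $\zeta$. Put $\eta(z)=\zeta^{-1}z$; then $\eta\circ f_2\circ\eta^{-1}(z)=\zeta^{-1}\bigl((\zeta z)^{d}+c_2\bigr)=\zeta^{d-1}z^{d}+\zeta^{-1}c_2=z^{d}+c_1=f_1(z)$, so that $C:=\{(\eta(t),t):t\in\bP^1\}$ is an irreducible curve, isomorphic to $\bP^1$ and projecting isomorphically (hence dominantly) onto each factor, with $\Phi(\eta(t),t)=(\eta(f_2(t)),f_2(t))\in C$; thus $C$ is $\Phi$-invariant, and since $f_2$ has infinitely many preperiodic points $t$, each $(\eta(t),t)$ with $t$ preperiodic is a preperiodic point of $\Phi$ lying on $C$.

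For the \emph{``only if'' direction}, assume such a $C$ exists. First I would check that $f_1$ and $f_2$ are non-exceptional: since $c_i\ne 0$, the polynomial $z^{d_i}+c_i$ has no finite totally ramified fixed point, so it is not conjugate to $z^{d_i}$; since $c_i\ne -2$ when $d_i=2$, and since for $d_i\ge 3$ a polynomial with a single finite critical point cannot be conjugate to $\pm T_{d_i}$ (which has $d_i-1$ finite critical points), it is not conjugate to $\pm T_{d_i}$; and no polynomial is a Latt\`es map. Hence Theorem~\ref{main result} applies and yields $\ell_1,\ell_2\ge 1$ with $d_1^{\ell_1}=d_2^{\ell_2}$ and $C$ preperiodic under $(f_1^{\ell_1},f_2^{\ell_2})$, so $f_1$ and $f_2$ are non-orthogonal. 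Invoking the Medvedev--Scanlon classification of periodic plane curves for a coordinatewise action of polynomials (see \cite{Medvedev-Scanlon}) --- together with the standard fact that $f$ non-exceptional forces $f^{n}$ non-exceptional, which rules out the ``special'' alternative in that classification --- one concludes that $f_1^{m_1}$ is linearly conjugate to $f_2^{m_2}$ for some $m_1,m_2\ge 1$.

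Next I would pin down the degrees and the conjugacy. For $f(z)=z^{d}+c$ with $c\ne 0$ and any $m\ge 1$, the finite critical points of $f^{m}$ are exactly the points whose $f$-orbit meets $0$ in fewer than $m$ steps, and the local degree of $f^{m}$ at such a point is $d^{r}$, where $r\ge 1$ counts the number of such meetings; a short combinatorial argument --- treating separately whether $0$ is $f$-periodic, and using that $c\ne 0$ makes $f^{-1}(0)$ consist of $d$ distinct points --- produces a finite critical point with $r=1$, so the minimum of the local degrees over all critical points of $f^{m}$ equals $d$. Since this minimum is a conjugacy invariant, $d_1=d_2=:d$, and then $d^{m_1}=d^{m_2}$ forces $m_1=m_2=:m$. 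Now fix a linear $\eta$ with $\eta^{-1}f_1^{m}\eta=f_2^{m}$ and set $A:=\eta^{-1}f_1\eta$, a polynomial of degree $d$ with $A^{m}=f_2^{m}$. Let $\phi$ be the normalized B\"ottcher coordinate of $f_2$ at $\infty$, so $\phi\circ f_2=\phi^{\,d}$, $\phi(z)\sim z$, and $\phi\circ f_2^{m}=\phi^{\,d^{m}}$; since $A$ commutes with $A^{m}=f_2^{m}$, the germ $\phi\circ A$ also satisfies $\chi\circ f_2^{m}=\chi^{\,d^{m}}$, and a uniqueness argument for this functional equation forces $\phi\circ A=\mu\cdot\phi^{\,d}$, where $\mu$ is the leading coefficient of $A$. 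A rigidity argument --- comparing $A$ with $f_2$ through their B\"ottcher coordinates, and using $c_2\ne 0$, which precludes a nontrivial rotational symmetry of the filled Julia set of $f_2$ --- then gives $\mu=1$, whence $\phi\circ A=\phi^{\,d}=\phi\circ f_2$, so $A=f_2$ and therefore $\eta^{-1}f_1\eta=f_2$. Finally, a linear map conjugating $z^{d}+c_1$ to $z^{d}+c_2$ must fix $\infty$ and send the finite critical point $0$ of one to that of the other, so it has the form $z\mapsto\lambda z$ with $\lambda^{d-1}=1$ (monicity) and $c_2=\lambda^{-1}c_1$; thus $c_2/c_1$ is a $(d-1)$-st root of unity, completing the proof.

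I expect the two structural inputs to be the crux. The first is extracting ``$f_1^{m_1}\sim f_2^{m_2}$'' from Theorem~\ref{main result}: this rests on the Medvedev--Scanlon description of periodic curves, together with the verification that its exceptional alternative is incompatible with $f_1,f_2$ being non-exceptional. The second is the rigidity statement that unicritical non-exceptional polynomials with a common iterate must coincide --- the B\"ottcher-coordinate argument, whose delicate point is eliminating the leading-coefficient twist $\mu\ne 1$, and where the hypothesis $c_i\ne 0$ genuinely enters. The critical-degree bookkeeping is elementary but a little delicate when $0$ is $f$-preperiodic.
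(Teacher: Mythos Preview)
Your ``if'' direction is fine and matches the paper. The gap is in the ``only if'' direction, at the sentence where you invoke Medvedev--Scanlon to conclude that $f_1^{m_1}$ is linearly conjugate to $f_2^{m_2}$. That is not what \cite{Medvedev-Scanlon} gives you. What \cite[Proposition~2.34]{Medvedev-Scanlon} yields from a periodic curve is a parametrization $(A_1(t),A_2(t))$ together with a polynomial $B$ and semiconjugacies
\[
f_1^{\ell_1 n}\circ A_1=A_1\circ B,\qquad f_2^{\ell_2 n}\circ A_2=A_2\circ B,
\]
where $A_1,A_2$ may have arbitrarily large degree. Deducing from this that some iterate of $f_1$ is linearly conjugate to some iterate of $f_2$ is exactly the nontrivial step, and it requires an analysis of all solutions $(A,B)$ of $f^n\circ A=A\circ B$ for $f(x)=x^d+c$. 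The paper devotes an entire subsection to this: Theorem~\ref{thm:1} shows, via Ritt-style polynomial decomposition (Engstrom's lemma, the Mason--Stothers theorem, and a Siegel-factor case analysis), that every such $A$ has the form $f^m\circ(x^\delta+c)\circ L$ and $B=L^{-1}\circ f_\delta^n\circ L$ for some divisor $\delta\mid d$, where $f_\delta=(x^\delta+c)^{d/\delta}$. Only after this does one obtain that $B$ is simultaneously conjugate to iterates of $F_\delta$ and $G_\eta$, hence those are conjugate to each other. Your proposal skips this entirely.

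Once that gap is filled, your remaining steps take a different route from the paper. You propose recovering $d_1=d_2$ by comparing minimal local degrees at critical points of $f_i^{m}$, and then passing from ``same $m$-th iterate up to conjugacy'' to ``conjugate'' via a B\"ottcher-coordinate rigidity argument. The paper instead proves an algebraic lemma (Lemma~\ref{lem:is an iterate}) showing directly that the commutant of $(x^\delta+c)^{d/\delta}$ is exactly its iterates, by analyzing the group $\Gamma$ of linear polynomials commuting with an iterate and showing it is trivial. Your B\"ottcher sketch is plausible but the step eliminating $\mu\ne 1$ is not fully justified as written: you need to argue that the induced rotation $\phi^{-1}(\mu\,\phi(\cdot))$ of the basin extends to a global symmetry of $J_{f_2}$, identify that symmetry group (it is the cyclic group of $d$-th roots of unity acting by $z\mapsto\omega z$, since $f_2(\omega z)=f_2(z)$), and then use $A^m=f_2^m$ to force $\omega=1$. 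That line can be completed, but it is not the ``standard fact'' you present it as.
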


We observe that since polynomials $f$ and $g$ of degree $d\ge 2$ in \emph{normal form} (i.e. they are monic and the coefficient of $x^{d-1}$ in both $f$ and $g$ equals $0$) are conjugate if and only if there exists a $(d-1)$-st root of unity $\zeta$ such that $g(x)=\zeta^{-1}f(\zeta x)$, then a polynomial of the form $x^d + c$ is conjugate to $x^d$ if and only if $c=0$, and it is conjugate to $\pm T_d(x)$ if and only if $d=2$ and $c=-2$. This is an immediate consequence of the fact that the coefficient of $x^{d-2}$ in $T_d(x)$ is always nonzero. 

The proof of Theorem~\ref{theorem example} employs the results of \cite{Medvedev-Scanlon} along additional techniques stemming from the study of  polynomial decomposition (see also \cite{Nguyen}). Essentially, now due to our Theorem~\ref{main result} and the classification of invariant curves under polynomial actions done by \cite{Medvedev-Scanlon}, one can relatively easy determine for any two polynomials $f_1$ and $f_2$ whether there exists a transversal curve $C\subset \bP^1\times \bP^1$ containing infinitely many points $(x_1,x_2)$ with each $x_i$ preperiodic under the action of $f_i$. Our Theorem~\ref{theorem example} is an example in this direction that we considered to be interesting by itself and also for its proof, since the family of unicritical polynomials is one of the most studied families of polynomials from the point of view of complex dynamics; e.g. see \cite{AKLS}. More importantly, lots of the dynamical phenomenons of the family of unicritical polynomials are inherited by general families. We state next another result, which is a consequence of our Theorem~\ref{general DMM result} coupled with \cite[Theorem~6.24]{Medvedev-Scanlon}.

\begin{theorem}
\label{same poly acting}
Let $f\in \C[x]$ be a polynomial of degree $d\ge 2$ which is not conjugate to $x^d$ or to $\pm T_d(x)$, and let $\Phi:\bP^1\times \bP^1\lra \bP^1\times \bP^1$ be defined by $\Phi(x,y)=(f(x), f(y))$. Let $C\subset \bP^1\times \bP^1$ be a curve defined over $\C$ which projects dominantly onto both coordinates. Then $C$ contains infinitely many preperiodic points under the action of $\Phi$ if and only if $C$ is an irreducible component of the zero locus of an equation of the form $\tilde{f}^n(x) = L(\tilde{f}^m(y))$, where $m,n\ge 0$ are integers, $L$ is a linear polynomial commuting with an iterate of $f$, and
$\tilde{f}$ is a non-linear polynomial of minimal degree  
commuting with an iterate of $f$.
\end{theorem}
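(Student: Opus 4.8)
The plan is to combine Theorem~\ref{main result} with Medvedev and Scanlon's classification of preperiodic plane curves under a coordinatewise polynomial action. Call a polynomial of degree $>1$ \emph{disintegrated} if it is not linearly conjugate to $x^d$ or to $\pm T_d(x)$; since a polynomial of degree $>1$ is never a Latt\`es map, the hypothesis on $f$ says exactly that $f$ is disintegrated, and in particular $f$ is not exceptional. Thus Theorem~\ref{main result} applies to $\Phi=(f,f)$ with $d_1=d_2=d$: condition~(i) there forces $\ell_1=\ell_2$, so condition~(ii) becomes preperiodicity of $C$ under an iterate of $\Phi$, which is equivalent to preperiodicity of $C$ under $\Phi$ itself. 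Hence $C$ contains infinitely many $\Phi$-preperiodic points if and only if $C$ is preperiodic under $\Phi$, and it remains to characterize such curves $C$.

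Next I would pass to $\bA^2$. Since $f$ is a polynomial, $\infty$ is a totally ramified fixed point with $f^{-1}(\infty)=\{\infty\}$, so $\Phi$ restricts to the polynomial self-map $(f,f)$ of $\bA^2=\bA^1\times\bA^1$ and $\Phi^{-1}(\bA^2)=\bA^2$. As $C$ projects dominantly onto both coordinates, it is not contained in $(\{\infty\}\times\bP^1)\cup(\bP^1\times\{\infty\})$, so $C_0:=C\cap\bA^2$ is a Zariski-dense affine plane curve; a routine argument using $\Phi^{-1}(\bA^2)=\bA^2$ and the finiteness of $\Phi$ yields $\Phi^{j}(C)\cap\bA^2=(f,f)^j(C_0)$ and $\overline{(f,f)^j(C_0)}=\Phi^j(C)$, whence $C$ is preperiodic under $\Phi$ if and only if $C_0$ is preperiodic under $(f,f)$ on $\bA^2$. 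Conversely, for $\tilde f$ nonlinear, the curve in $\bP^1\times\bP^1$ cut out by $\tilde f^n(x)=L(\tilde f^m(y))$ is the closure of its affine trace, so it suffices to work in $\bA^2$.

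Now I would invoke \cite[Theorem~6.24]{Medvedev-Scanlon}: since $f$ is disintegrated, the irreducible curves in $\bA^2$ projecting dominantly onto both factors and preperiodic under $(f,f)$ are precisely the irreducible components of the zero loci $\{\tilde f^n(x)=L(\tilde f^m(y))\}$ with $n,m\ge 0$, with $L$ a linear polynomial commuting with an iterate of $f$, and with $\tilde f$ a nonlinear polynomial of minimal degree commuting with an iterate of $f$; combined with the previous two paragraphs this gives the forward implication. For the converse, any such curve is $\Phi$-preperiodic: choose $k\ge 1$ with $f^k$ commuting with both $L$ and $\tilde f$; then for $(x,y)$ on the curve,
\[
\tilde f^n(f^k(x))=f^k(\tilde f^n(x))=f^k\bigl(L(\tilde f^m(y))\bigr)=L\bigl(f^k(\tilde f^m(y))\bigr)=L\bigl(\tilde f^m(f^k(y))\bigr),
\]
so $\Phi^k$ maps the curve into itself; being finite, $\Phi^k$ permutes its finitely many irreducible components, so each of them is $\Phi$-periodic and hence carries infinitely many $\Phi$-preperiodic points by Theorem~\ref{main result}.

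The genuinely new ingredient is Theorem~\ref{main result} (built on equidistribution and Levin's theorem), which converts ``$C$ carries a Zariski-dense set of preperiodic points'' into ``$C$ is preperiodic''; the classification of preperiodic curves is then entirely carried by \cite{Medvedev-Scanlon}, whose proof rests on Ritt's theory of commuting polynomials. The only point demanding care, and the one I expect to require the most bookkeeping, is aligning that citation with the statement above: if \cite{Medvedev-Scanlon} phrases the classification for \emph{periodic} rather than preperiodic curves, one first replaces $C_0$ by a forward iterate $(f,f)^{m_0}(C_0)$ that is periodic, applies the theorem there, and then pulls the resulting equation back along $(f^{m_0},f^{m_0})$; that $\tilde f^a\circ f^{m_0}$ and $L_1\circ\tilde f^b\circ f^{m_0}$ can again be rewritten as $L'\circ\tilde f^{\,\bullet}$ follows from the fact that the polynomials commuting with an iterate of a disintegrated polynomial form the cyclic semigroup generated by $\tilde f$ together with the finite cyclic group of linear polynomials commuting with $\tilde f$.
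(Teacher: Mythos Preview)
Your proposal is correct and follows essentially the same route as the paper: reduce to preperiodicity via Theorem~\ref{main result}, then apply \cite[Theorem~6.24]{Medvedev-Scanlon} to a periodic forward image and pull the resulting equation back. Two cosmetic differences: the paper passes to $\tilde\Phi=(\tilde f,\tilde f)$ before invoking Medvedev--Scanlon (which makes the pullback step a one-liner, since $\tilde\Phi^N(C)$ is already the graph of some $g=\ell\circ\tilde f^M$), and its converse is more direct than yours---it simply notes that on a curve cut out by $\tilde f^n(x)=L(\tilde f^m(y))$ the first coordinate is $f$-preperiodic iff the second is, so no second appeal to Theorem~\ref{main result} is needed.
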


The plan of our paper is as follows. In Sections~\ref{section Julia set}~and~\ref{section equidistribution setup} we setup our notation, state basic properties for the Julia set of a rational function, construct the heights associated to an algebraic dynamical system and define adelic metrized line bundles which are employed in the main equidistribution result (Theorem~\ref{Yuan equidistribution}), which we will then use in our proof. In Section~\ref{section equal measures} we prove that under the hypothesis of Theorem~\ref{main result}, the measures induced on the curve $C$ from the two dynamical systems $(\bP^1, f_i)$ (for $i=1,2$) are equal. In Section~\ref{section from identical to preperiodic} we show how to use the equality of these two measures to infer the preperiodicity of the curve. In Section~\ref{section proof of main results} we finalize the proofs for Theorems~\ref{main result}~and~\ref{general DMM result}. We conclude our paper with Section~\ref{section examples} in which we prove Theorems~\ref{theorem example}~and~\ref{same poly acting}.
  
\medskip  

  {\bf Acknowledgments.} We thank Laura DeMarco, Charles Favre, Holly Krieger, Matt Satriano, Joseph Silverman, Thomas Tucker, Junyi Xie  and Shouwu Zhang  
	for useful discussions. We are grateful to both referees for their many useful comments and suggestions which improved our presentation. The first and third authors are partially supported by NSERC and the second author is partially supported
	by a UBC and PIMS postdoctoral fellowship. 	


\section{Dynamics of a rational function}
\label{section Julia set}

Let $f:\bP^1\to \bP^1$ be a rational function defined over $\C$ of degree $d\geq 2$. As always in dynamics, we denote by $f^n$ the $n$-th compositional iterate of $f$.


\subsection{The Julia set}   A family of maps on $U$ is {\em normal} if for any sequence of maps in this family, we can always pick a subsequence converging locally uniformly on $U$.  The {\em Julia set} $J_f$ is the set of points on $\bP^1_\C$, where $\{f^n\}_{n\geq 1}$ is not  normal restricted on any of their neighbourhoods. The Julia set $J_f$  is closed, nonempty and invariant under $f$. Let $x$ be a periodic point in a cycle of exact period $n$; then the {\em multiplier} $\lambda$ of this cycle (or of the periodic point $x$) is the derivative of $f^n$ at $x$. A cycle is {\em repelling} if its multiplier has absolute value greater than $1$. All but finitely many cycles of $f$ are repelling, and repelling cycles are in the Julia set $J_f$. Locally, at a repelling fixed point $x$ with multiplier $\lambda$, we can conjugate $f$ to the linear map $z\to \lambda \cdot z$ near $z=0$ (note that $\lambda\ne 0$ since the point is assumed to be repelling). For more details about the dynamics of a rational function, we refer the reader to Milnor's book \cite{Milnor:book}. 


\subsection{The invariant measure} 
\label{subsection measures}

There is a probability measure $\mu_f$ on $\bP^1$ associated to $f$, which is the unique $f$-invariant measure achieving maximal entropy $\log d$; see \cite{Bro, Lyu1, FLM, Man}.  Also $\mu_f$ is the unique measure satisfying 
\begin{equation}\label{measure growth}
\mu_f(f(A))=d\cdot \mu_f(A)
\end{equation} for any Borel set $A\subset \bP^1_\C$ with $f$ injective restricted on $A$.  The support of $\mu_f$ is $J_f$, and $\mu_f(x)=0$ for any $x\in \bP^1_\C$. 


\subsection{Measures on a curve associated to a dynamical system}
\label{subsection measures 2}

Let $C\subset\bP^1_\C\times \bP^1_\C$ be an irreducible curve projecting dominantly onto both coordinates, i.e., the canonical projections $\pi_i:\bP^1\times \bP^1\lra \bP^1$ (for $i=1,2$) restrict to dominant morphisms $(\pi_i)|_C:C\lra \bP^1$. By abuse of notation, we denote the restriction $(\pi_i)_C$ also by $\pi_i$. We define probability measures $\tilde{\mu}_{i,f}$ (for $i=1,2$) on $C$ corresponding to the dynamical system $(\bP^1_{\C}, f)$. For each $i=1, 2$, we  pullback $\mu_f$ by $\pi_i$ to get a measure $\pi_i^*\mu_f$ on $C$ in the following sense
   $$\pi_i^*\mu_f(A):=\mu_f(\pi_i(A))$$
for any Borel set $A\subset C$. Hence we get probability measures on $C$
  $$\tilde{\mu}_{i,f}:=\pi_i^*\mu_f/\deg(\pi_i)\text{ for $i=1,2$.}$$
When there is no confusion on what projection $\pi_i$ we used to construct $\tilde{\mu}_{i,f}$ we drop the index $i$ from our notation and simply use $\tilde{\mu}_f$ to denote the corresponding probability measure on $C$ induced by the dynamical system $(\bP^1, f)$.


\subsection{Symmetries of the Julia set}\label{sym}

Let $H$ be a meromorphic function on some disc $B(a, r)$ of radius $r$ centred at a point $a\in J_f$. We say that $H$ is a {\em symmetry} on $J_f$ if it satisfies
\begin{itemize}
\item $x\in B(a, r)\cap J_f$ if and only if $H(x)\in H(B(a,r))\cap J_f$.
\item When $J_f$ is either a circle, line segment, or entire sphere, there is a constant $\alpha>0$ such that for any Borel set $A$ where $H|_A$ is injective, one has $\mu_f(H(A))=\alpha \cdot \mu_f(A)$. 
\end{itemize}
A family $\mathcal{H}$ of symmetries of $J_f$ on $B(a, r)$ is said to be {\em nontrivial} if $\mathcal{H}$ is normal on $B(a,r)$ and no limit function of $\mathcal{H}$ is a constant. A rational function is \emph{post-critically finite} (sometimes called critically finite), if each of its critical points has finite forward orbit, i.e. all critical points are preperiodic. According to Thurston \cite{Thu, DH}, there is an orbifold structure on $\bP^1$ corresponding to each post-critically finite map. A rational function is post-critically finite with {\em parabolic} orbifold if and only if it is exceptional; or equivalently its Julia set is smooth (circle, line segment or entire sphere) with smooth maximal entropy measure on it; see \cite{DH}. Actually, according to Hamilton \cite{Ham}, a Julia set which is a one-dimensional topological manifold must be either a circle, closed line segment (up to an automorphism of $\P^1$) or of Hausdorff dimension greater than one. By a theorem of  Zdunik \cite{Zdu},  a rational function $f$  is Latt\`es if and only if $J_f$ is $\P^1$ and $\mu_f$ is absolutely continuous with respect to Lebesgue measure on $\P^1$. 
In \cite[Theorem~1]{Levin}, it was shown that there exists an infinite nontrivial family of symmetries on $J_f$ if and only if $f$ is post-critically finite with parabolic orbifold.


  \section{Adelic metrized line bundles and the equidistribution of small points}
\label{section equidistribution setup}

  In this section, we setup the  height functions and state the equidistribution theorem for points of small height, which would be used later in proving the main theorems of this article. The main tool we use here is the arithmetic equidistribution theorem for points with small height on algebraic curves (see \cite{C-L, Yuan}).

\subsection{The height functions}\label{height subsection} Let $K$ be a number field and $\Kbar$ be the algebraic closure of $K$. The number field $K$ is naturally equipped with a set $\OK$ of pairwise inequivalent nontrivial absolute values, together with positive integers $N_v$ for each $v\in \OK$ such that
\begin{itemize}
\item for each $\alpha \in K^*$, we have $|\alpha|_v=1$ for all but finitely many places $v\in \OK$. 
\item every $\alpha \in K^*$ satisfies the {\em product formula}
\begin{equation}\label{product formula}
   \prod_{v\in \OK} |\alpha|_v^{N_v}=1
   \end{equation}
\end{itemize}
For each $v\in \OK$, let $K_v$ be the completion of $K$ at $v$, let $\Kbar_v$ be the algebraic closure of $K_v$ and let $\C_v$ denote the completion of $\Kbar_v$. We fix an embedding of $\Kbar$ into $\C_v$ for each $v\in \OK$; hence we have a fixed extension of $|\cdot |_v$ on $\Kbar$. When $v$ is archimedean, then $\C_v\cong \C$. Let $f\in K(z)$ be a rational function with degree $d\geq 2$. There is a {\em canonical height} $\hhat_f$ on $\P^1(\Kbar)$ given by 
 \begin{equation}\label{C-S height}
\hhat_f(x):=\frac{1}{[K(x):K]} \lim_{n\to \infty}\sum_{\tilde{y}\in \Gal(\Kbar/K)\cdot \tilde{x}}~ \sum_{v\in \OK}N_v\cdot \frac{ \log\|F^n(\tilde{y})\|_v}{d^n}
\end{equation}
 where $F:K^2\to K^2$ and $\tilde{x}$ are homogenous lifts of $f$ and $x\in \P^1(\Kbar)$, and $\|(z_1, z_2)\|_v:=\max\{|z_1|_v, |z_2|_v\}$. By product formula (\ref{product formula}), the height $\hhat_f$ does not depend on the choice of the homogenous lift $F$ and therefore it is well-defined. As proven in   \cite{C-S}, $\hhat_f(x)\geq 0$ with equality if and only if $x$ is preperiodic under the iteration of $f$.

  
\subsection{Adelic metrized line bundle} 
\label{subsection adelic metrized line bundles}

Let $\cL$ be an ample line bundle of an irreducible projective curve $C$ over a number field $K$. As in Subsection~\ref{height subsection}, $K$ is naturally equipped with absolute values $|\cdot|_v$ for $v\in \OK$. A {\em metric} $\|\cdot\|_v$ on $\cL$ is a collection of norms, one for each $x\in X(K_v)$, on the fibres  $\cL(x)$ of the line bundle, with 
   $$\|\alpha s(x)\|_v=|\alpha|_v\|s(x)\|_v$$
for any section $s$ of $\cL$. An {\em adelic metrized line bundle} $\cLbar=\{\cL, \{\|\cdot\|_v\}_{v\in \OK}\}$ over $\cL$ is a collection of metrics on $\cL$, one for each place $v\in \OK$, satisfying certain continuity and coherence conditions; see \cite{Zhang:line, Zhang:metrics}. 

There are various adelic metrized line bundles; the simplest such adelic metrized line bundle is the line bundle $\O_{\P^1}(1)$ equipped with metrics $\|\cdot\|_v$ (for each $v\in \OK$), which evaluated at a section $s:=u_0X_0 + u_1X_1$ of $\O_{\bP^1}(1)$ (where $u_0, u_1$ are scalars and $X_0, X_1$ are the canonical sections of $\O_{\bP^1}(1)$) is given by 
   $$\|s([x_0: x_1])\|_v:=\frac{|u_0x_0+u_1x_1|_v}{\max\{|x_0|_v, |x_1|_v\}}.$$

Furthermore, we can define other metrics on $\O_{\P^1}(1)$ corresponding to a rational function $f$ of degree $d\geq 2$ defined over $K$. We fix  a homogenous lift  $F: K^2\to K^2$ of $f$ with homogenous degree $d$. For $n\geq 1$, write $F^n=(F_{0, n}, F_{1, n})$. For each place $v\in \OK$, we can define a metric on $\O_{\P^1}(1)$ as
   $$\|s([x_0: x_1])\|_{v, F, n}:=\frac{|u_0x_0+u_1x_1|_v}{\max\{|F_{0, n}(x_0, x_1)|_v, |F_{1, n}(x_0, x_1)|_v\}^{1/d^n}},$$
where $s=u_0X_1+u_1X_1$ with $u_0, u_1$ scalars and $X_0, X_1$ canonical sections of $\O_{\P^1}(1)$. Hence $\{\O_{\P^1}(1),\{ \|\cdot\|_{v, F, n}\}_{v\in \OK}\}$ is an adelic metrized line bundle over $\O_{\P^1}(1)$. 

A sequence $\{\cL, \{\|\cdot\|_{v,n}\}_{v\in \OK}\}_{n\geq 1}$ of adelic metrized line bundles over an ample line bundle $\cL$ on a curve $C$ is convergent to $\{\cL, \{\|\cdot\|_v\}_{v\in \OK}\}$, if for all $n$ and all but finitely many $v\in\OK$, $\|\cdot\|_{v,n}=\|\cdot\|_v$, and if $\left\{\log\frac{\|\cdot\|_{v,n}}{\|\cdot\|_v}\right\}_{n\geq 1}$ converges to $0$ uniformly on $C(\C_v)$ for all $v\in \OK$. The limit $\{\cL, \{\|\cdot\|_v\}_{v\in \OK}\}$ is an adelic metrized line bundle. 

A typical example of a convergent sequence of adelic metrized line bundles is $\{\{\O_{\P^1}(1),\{ \|\cdot\|_{v, F, n}\}_{v\in \OK}\}\}_{n\geq 1}$ which converges to the metrized line bundle denoted by $\{\O_{\P^1}(1), \{\|\cdot\|_{v,F}\}_{v\in\OK}\}$  (see \cite{BR} and also see  \cite[Theorem~2.2]{Zhang:metrics} for the more general case of a polarizable endomorphism $f$ of a projective variety). 

Let $f$ be a rational function of degree $d\ge 2$ defined over the number field $K$, let $C$ be a curve equipped with a non-constant morphism $\psi:C\to \P^1$ defined over $K$, and $\cL_\psi:=\psi^*\O_{\P^1}(1)$ be the pullback line bundle by $\psi$. For a fixed homogenous lift $F$ of $f$ with degree $d\geq 2$ and places $v\in \OK$, the metrics $\|\cdot\|_{v, \psi, F, n}$ on $\cL_\psi$ are defined as $\|\cdot \|_{v,\psi, F,n}=\psi^*\|\cdot \|_{v,F,n}$. As the sequence $\{\O_{\P^1}(1),\{ \|\cdot\|_{v, F, n}\}_{v\in \OK}\}_{n\geq 1}$ is convergent, we get a sequence of convergent adelic metrized line bundles $\{\cL_{\psi},\{ \|\cdot\|_{v, \psi, F, n}\}_{v\in \OK}\}_{n\geq 1}$ and use 
  $$\cLbar_{\psi, F}:=\{\cL_{\psi},\{ \|\cdot\|_{v, \psi, F}\}_{v\in \OK}\}$$
to denote its limit, which is an adelic metrized line bundle over $\cL_\psi$ such that 
\begin{equation}
\label{definition metric 2}
 \|\cdot\|_{v, \psi, F}=\psi^*\|\cdot\|_{v,F}.
\end{equation}
For sections $s$ of $\cL_\psi$ of the form $\psi^*(u_0X_0+u_1X_1)$ we have that 
\begin{equation}
\label{definition metric 1}
\|s(t)\|_{v, \psi, F, n}:=\frac{|u_0\psi_0(t)+u_1\psi_1(t)|_v}{\max\{|F_{0, n}(\psi_0(t), \psi_1(t))|_v, |F_{1, n}(\psi_0(t), \psi_1(t))|_v\}^{1/d^n}},
\end{equation}
where $\psi(t)=[\psi_0(t): \psi_1(t)]$, and so,  
$$
\|s(t)\|_{v,\psi,F} = \frac{|u_0\psi_0(t)+u_1\psi_1(t)|_v}{\lim_{n\to\infty} \max\{|F_{0, n}(\psi_0(t), \psi_1(t))|_v, |F_{1, n}(\psi_0(t), \psi_1(t))|_v\}^{1/d^n}}.
$$


\subsection{Equidistribution of small points}
\label{subsection equidistribution small points}

For a semipositive line bundle $\cLbar$ on a (irreducible) curve $C$ defined over a number field $K$, the height for $x\in C(\Kbar)$ is given by 
\begin{equation}\label{points height}
\hhat_{\cLbar}(x)=\frac{1}{|\Gal(\Kbar/K)\cdot x|}\sum_{y\in\Gal(\Kbar/K)\cdot x}~\sum_{v\in \OK}-N_v\log\|s(y)\|_v
\end{equation}
where $|\Gal(\Kbar/K)\cdot x|$ is the number of points in the Galois orbits of $x$, and $s$ is any meromorphic section of $\cL$ with support disjoint from $\Gal(\Kbar/K)\cdot x$. A sequence of points $x_n\in C(\Kbar)$ is {\em small}, if $\lim_{n\to \infty} \hhat_{\cLbar}(x_n)=\hhat_{\cLbar}(C)$; see \cite{Zhang:metrics} for more details on constructing the height for any irreducible subvariety $Y$ of $C$ (which is denoted by $\hhat_\cLbar(Y)$). We use the following equidistribution result due to Chambert-Loir \cite{C-L} (in the case of curves) and to Yuan \cite{Yuan} (in the case of an arbitrary projective variety). 

\begin{theorem}\cite[Theorem 3.1]{Yuan}\label{Yuan equidistribution}
Suppose $C$ is a projective curve over a number field $K$, and $\cLbar$ is a metrized line bundle over $C$ such that $\cL$ is ample and the metric is semipositive. Let $\{x_n\}$ be a non-repeating sequence of points in $C(\Kbar)$ which is small. Then for any $v\in \OK$, the Galois orbits of the sequence $\{x_n\}$ are equidistributed in the analytic space $C^{an}_{\C_v}$ with respect to the probability measure $d\mu_v=c_1(\cLbar)_v/\deg_\cL(C)$. 
\end{theorem}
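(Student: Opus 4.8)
This is the arithmetic equidistribution theorem of Chambert-Loir (for curves) and Yuan (in general), and the plan is to prove it by the \emph{variational principle} of Szpiro--Ullmo--Zhang: perturb the metric of $\cLbar$ at the place $v$ and extract equidistribution from the resulting change in heights. First I would fix $v\in\OK$ and, for each $n$, let $\nu_n$ be the probability measure on $C^{an}_{\C_v}$ equal to the average of the Dirac masses at the images (under the fixed embedding $\Kbar\hookrightarrow\C_v$) of the points in the Galois orbit of $x_n$. Since the probability measures on the compact space $C^{an}_{\C_v}$ are weak-$*$ compact, it is enough to prove that $\int_{C^{an}_{\C_v}}\phi\,d\nu_n\to\int_{C^{an}_{\C_v}}\phi\,d\mu_v$ for every continuous $\phi\colon C^{an}_{\C_v}\to\R$, and, replacing $\phi$ by $-\phi$, it then suffices to establish the one-sided bound $\liminf_n\int\phi\,d\nu_n\ge\int\phi\,d\mu_v$.

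For $\epsilon>0$ I would introduce the adelic metrized line bundle $\cLbar(\epsilon\phi)$ obtained from $\cLbar$ by replacing $\|\cdot\|_v$ with $e^{-\epsilon\phi}\|\cdot\|_v$ and leaving the other metrics fixed. The argument then rests on two elementary computations and two facts about minima. From the height formula \eqref{points height}, the perturbation changes the height of each point exactly linearly:
\begin{equation*}
\hhat_{\cLbar(\epsilon\phi)}(x_n)=\hhat_{\cLbar}(x_n)+\epsilon\,N_v\int_{C^{an}_{\C_v}}\phi\,d\nu_n ,
\end{equation*}
while the normalized height $\hhat_{\cLbar(\epsilon\phi)}(C)$, being an arithmetic self-intersection number, is a quadratic polynomial in $\epsilon$ with value $\hhat_{\cLbar}(C)$ and first derivative $N_v\int\phi\,d\mu_v$ at $\epsilon=0$, so $\hhat_{\cLbar(\epsilon\phi)}(C)=\hhat_{\cLbar}(C)+\epsilon N_v\int\phi\,d\mu_v+O(\epsilon^2)$. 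The two facts are: (a) Zhang's inequality $e_1(\overline{\cM})\ge\hhat_{\overline{\cM}}(C)$ for a semipositive adelically metrized ample line bundle $\overline{\cM}$ on $C$, where $e_1$ is the essential minimum (this inequality itself resting on the arithmetic Hodge index theorem); and (b) because $\{x_n\}$ is non-repeating, every proper Zariski-closed subset of $C$ contains only finitely many $x_n$, so $e_1(\overline{\cM})\le\liminf_n\hhat_{\overline{\cM}}(x_n)$ for any continuously metrized ample $\overline{\cM}$. Applying (a) and (b) to $\overline{\cM}=\cLbar(\epsilon\phi)$, plugging in the two formulas and using the hypothesis $\hhat_{\cLbar}(x_n)\to\hhat_{\cLbar}(C)$, I would obtain
\begin{equation*}
\hhat_{\cLbar}(C)+\epsilon N_v\int\phi\,d\mu_v+O(\epsilon^2)\le e_1(\cLbar(\epsilon\phi))\le\hhat_{\cLbar}(C)+\epsilon N_v\liminf_n\int\phi\,d\nu_n ,
\end{equation*}
and then, cancelling $\hhat_{\cLbar}(C)$, dividing by $\epsilon N_v>0$, and letting $\epsilon\to0^+$, one gets $\int\phi\,d\mu_v\le\liminf_n\int\phi\,d\nu_n$, which finishes the proof after the same argument applied to $-\phi$.

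The hard part — the genuine content of both Chambert-Loir's and Yuan's work — is that Zhang's inequality (a) requires $\cLbar(\epsilon\phi)$ to be \emph{semipositive}, which fails for a general continuous $\phi$ (and $c_1(\cLbar)_v$ may even be supported on a Cantor-type set, as happens for the dynamical Julia-set measures relevant to this paper, so one cannot simply appeal to positivity of a smooth density). To handle this one first reduces, by a density argument, to test functions $\phi$ in a class for which $\cLbar(\epsilon\phi)$ stays semipositive for small $\epsilon>0$; carrying this out rigorously on the Berkovich curve $C^{an}_{\C_v}$ uses Thuillier's potential theory at the non-archimedean places, and in higher dimensions it is replaced by Yuan's arithmetic bigness theorem together with the differentiability of the arithmetic volume function, which bypasses semipositivity of the perturbation altogether. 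All the remaining ingredients — the $v$-adic measures $c_1(\cLbar)_v$ (Chambert-Loir measures at the finite places, curvature/Monge--Amp\`ere measures at the archimedean ones), the intersection-theoretic definition of $\hhat_{\cLbar}(C)$, and the continuity of everything under uniform convergence of metrics — are part of the adelic Arakelov formalism of \cite{Zhang:metrics} recalled in Section~\ref{section equidistribution setup}.
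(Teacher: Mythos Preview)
The paper does not prove this theorem at all: it is quoted verbatim as \cite[Theorem~3.1]{Yuan} and used as a black box, so there is no ``paper's own proof'' to compare against. Your sketch is a faithful outline of the Szpiro--Ullmo--Zhang variational argument as adapted by Chambert-Loir and Yuan, and the issues you flag (semipositivity of the perturbed metric, Thuillier's potential theory at non-archimedean places, Yuan's bigness/volume differentiability in higher dimension) are exactly the ones that carry the real weight; for the purposes of this paper, however, none of that is needed beyond the citation.
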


When $v$ is archimedean, $C_{\C_v}^{an}$ corresponds to $C(\C)$ and the curvature $c_1(\cLbar)_v$ of the metric $\|\cdot\|_v$ is given by $c_1(\cLbar)_v=\frac{\partial \overline{\partial}}{\pi i}\log \|\cdot\|_v$. For non-archimedean place $v$, $C_{\C_v}^{an}$ is the Berkovich space associated to $C(\C_v)$, and Chambert-Loir \cite{C-L} constructed an analog of curvature on $C_{\C_v}^{an}$. The precise meaning of the equidistribution above is that  
   $$\lim_{n\to \infty} \frac{1}{|\Gal(\Kbar/K)\cdot x_n|}\sum_{y\in \Gal(\Kbar/K)\cdot x_n} \delta_{y}=\mu_v,$$
where $\delta_{y}$ is point mass probability measure supported on $y\in C^{an}_{\C_v}$, and the limit is the weak limit for probability measures on the compact space $C^{an}_{\C_v}$. 

For a dynamical system $(\bP^1, f)$, at an archimedean place $v$, it is well known that the curvature of the limit of metrized line bundles 
  $$\{\O_{\P^1}(1),\{ \|\cdot\|_{v, F, n}\}_{v\in \OK}\}_{n\geq 1}$$
  is a $(1,1)$-current given by $d\mu_f$, which is independent of the choice of $F$. We conclude this section by noting that in the case of the metrized line bundle $\cLbar_{\psi, F}$ on a curve $C$ associated to a morphism $\psi:C\lra \bP^1$, then at an archimedean place $v$ we have that 
\begin{equation}
\label{equation c_1}
c_1(\cLbar_{\psi, F})_v=\psi^*{d\mu_f}, 
\end{equation}
where $\mu_f$ is the invariant measure on $\bP^1_{\C_v}$ associated to the rational function $f$ (see Subsection~\ref{subsection measures}). Note that since $v$ is archimedean, then $\C_v=\C$ and so, the equality from  \eqref{equation c_1} follows simply by taking $\frac{\partial \overline{\partial}}{\pi i}\log \|\cdot\|_{\psi, F,v}$ which yields $\psi^*d\mu_f$ by the definition of the metric $\|\cdot\|_{\psi, F, v}$.


\section{Equal measures}
\label{section equal measures}

We work under the assumption from the direct implication in Theorem~\ref{main result}, under the additional assumption that the rational functions $f_i$ and the curve $C$ are all defined over $\Qbar$.  

So, $f_1$ and $f_2$ are rational functions each of degree $d_i>1$ defined over a number field $K$. Also, $C\subset \bP^1\times \bP^1$ is a (irreducible) curve defined over $K$, which projects dominantly onto both coordinates. We assume $C$ contains infinitely many  points of small height, i.e. there exists an infinite sequence of points  $(x_{1,n}, x_{2,n})\in C(\Kbar)$ such that $\lim_{n\to\infty} \hhat_{f_1}(x_{1,n}) + \hhat_{f_2}(x_{2,n})=0$. 

We let $\pi_i:C\lra \bP^1$ for $i=1,2$ be  projection maps of $\bP^1\times \bP^1$ (restricted on $C$) onto both coordinates.  For each $i=1,2$, we let $\cLbar_i:=\cLbar_{\pi_i, F_i}$ be the adelic metrized line bundle on $\cL_i:=\pi_i^*\O_{\bP^1}(1)$ constructed as in Subsection~\ref{subsection adelic metrized line bundles} with respect to the rational function $f_i$. Then for each point $t\in C(\Qbar)$, using the product formula (\ref{product formula}) and equations (\ref{C-S height}, \ref{definition metric 2}, \ref{points height}) we have
\begin{equation}
\label{formula for canonical height}
\hhat_{\cLbar_i}(t)= \hhat_{f_i}(\pi_i(t)).
\end{equation}
Using \cite[Theorem~(1.10)]{Zhang:metrics} along with \eqref{formula for canonical height}, also coupled with the fact that $C$ projects dominantly onto each factor of $\bP^1\times\bP^1$ and the fact that there are infinitely many preperiodic points for $f_i$ (which thus have canonical height $0$), we conclude that 
\begin{equation}
\label{for canonical height 2}
\hhat_{\cLbar_i}(C)=0\text{ for each $i=1,2$.} 
\end{equation}

We let $\mu_i$ be the invariant measure on $\bP^1$ corresponding to the rational function $f_i$, for $i=1,2$; then we let $\tilde{\mu}_i:=\left(\pi_i^*\mu_i\right)/\deg(\pi_i)$. 

\begin{proposition}
\label{equal measures proposition}
With the above notation, $\tilde{\mu}_1 = \tilde{\mu}_2$.
\end{proposition}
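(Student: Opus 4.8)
The plan is to apply the arithmetic equidistribution theorem (Theorem~\ref{Yuan equidistribution}) to the two semipositive adelic metrized line bundles $\cLbar_1$ and $\cLbar_2$ on $C$ and to compare the resulting limiting measures at one fixed archimedean place. First I would fix an embedding $K\into\C$, so that $\C_v=\C$ and $C^{an}_{\C_v}$ is just the Riemann surface $C(\C)$. From \eqref{formula for canonical height} and \eqref{for canonical height 2} we know $\hhat_{\cLbar_i}(C)=0$ for $i=1,2$, and for the sequence $(x_{1,n},x_{2,n})\in C(\Kbar)$ we have $\hhat_{\cLbar_i}\bigl((x_{1,n},x_{2,n})\bigr)=\hhat_{f_i}(x_{i,n})\to 0$ (after passing to a subsequence we may assume the sequence is non-repeating, which is legitimate since a point of height $0$ on $C$ with $C$ not preperiodic cannot occur infinitely often — or more simply, if the sequence is eventually constant the conclusion is vacuous because then $C$ would have a Zariski-dense set of preperiodic points concentrated at finitely many points, contradicting $C$ projecting dominantly). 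Hence $\{(x_{1,n},x_{2,n})\}$ is a small sequence for \emph{both} $\cLbar_1$ and $\cLbar_2$.

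Next I would invoke Theorem~\ref{Yuan equidistribution} twice. Applied with the line bundle $\cLbar_1$ it gives that the Galois orbits of $(x_{1,n},x_{2,n})$ equidistribute on $C(\C)$ with respect to $c_1(\cLbar_1)_v/\deg_{\cL_1}(C)$; by \eqref{equation c_1} this curvature equals $\pi_1^*(d\mu_1)$, and the normalizing degree $\deg_{\cL_1}(C)=\deg(\pi_1)$, so the limit measure is exactly $\tilde\mu_1=\pi_1^*\mu_1/\deg(\pi_1)$. Applied with $\cLbar_2$ and the \emph{same} point sequence, the same theorem gives that the Galois orbits equidistribute with respect to $\tilde\mu_2=\pi_2^*\mu_2/\deg(\pi_2)$. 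Since a weakly convergent sequence of probability measures has a unique limit, the two limit measures coincide: $\tilde\mu_1=\tilde\mu_2$.

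The only genuine points requiring care are: (a) checking the sequence can be taken non-repeating — this is where I would either cite that $C$ is not preperiodic (hypothesis of the direct implication) so that only finitely many of the $(x_{1,n},x_{2,n})$ can be preperiodic points lying on $C$, or simply observe that if infinitely many of the $x_{i,n}$ repeat we extract a non-repeating subsequence, and if every subsequence repeats then the whole limiting statement is about finitely many points and the hypothesis already fails; (b) confirming that $\cL_i=\pi_i^*\O_{\bP^1}(1)$ is ample on $C$ and the metric $\|\cdot\|_{v,\pi_i,F_i}$ is semipositive, both of which were recorded in Subsection~\ref{subsection adelic metrized line bundles} (ampleness because $\pi_i$ is a non-constant morphism from the curve $C$, semipositivity because $\cLbar_i$ is a uniform limit of the semipositive model metrics $\|\cdot\|_{v,\pi_i,F_i,n}$); and (c) matching the normalization: the probability measure in Theorem~\ref{Yuan equidistribution} is $c_1(\cLbar_i)_v/\deg_{\cL_i}(C)$, and since $c_1(\cLbar_i)_v=\pi_i^*(d\mu_f)$ has total mass $\deg(\pi_i)=\deg_{\cL_i}(C)$, dividing by the degree produces precisely $\tilde\mu_i$. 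I expect (a) to be the main obstacle in the sense of being the only step needing an argument rather than a citation, but it is routine; the substance of the proposition is simply that the \emph{same} small sequence forces \emph{both} equidistribution measures, hence they agree.
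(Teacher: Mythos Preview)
Your proposal is correct and follows essentially the same approach as the paper: apply Theorem~\ref{Yuan equidistribution} to the single small sequence $t_n=(x_{1,n},x_{2,n})$ with respect to each of $\cLbar_1$ and $\cLbar_2$, identify the resulting equidistribution measures via \eqref{equation c_1} as $\tilde\mu_1$ and $\tilde\mu_2$, and conclude they coincide by uniqueness of weak limits. The paper's proof is terser and omits your verifications (a)--(c); your discussion of (a) is slightly overcomplicated, since the setup already stipulates that $C$ contains infinitely many points of small height, so a non-repeating subsequence exists trivially.
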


\begin{proof}
This is an immediate consequence of Theorem~\ref{Yuan equidistribution} applied to the sequence of points $t_n:=(x_{1,n}, x_{2,n})$ with respect to the adelic metrized line bundles $\cLbar_1$ and $\cLbar_2$. Indeed, using \eqref{formula for canonical height} and \eqref{for canonical height 2} and the assumption on the points $(x_{1,n}, x_{2,n})$ we get that $$\lim_{n\to\infty}\hhat_{\cLbar_i}(t_n)=\hhat_{\cLbar_i}(C),$$ for each $i=1,2$. Therefore we can apply Theorem~\ref{Yuan equidistribution} and conclude that the points $\{t_n\}$ are equidistributed on $C$ with respect to both measures $\tilde{\mu}_1$ and $\tilde{\mu}_2$ (see also \eqref{equation c_1} which yields that $\tilde{\mu}_i$ are indeed the probability measures on $C(\C)$ with respect to which the points $\{t_n\}$ are equidistributed). This concludes the proof of Proposition~\ref{equal measures proposition}.
\end{proof}


  \section{From equal measures to preperiodic curves}
\label{section from identical to preperiodic}

In this section we pick up exactly where Proposition~\ref{equal measures proposition} led us:  we have a curve $C$ projecting dominantly onto each coordinate of $\bP^1\times \bP^1$, and we have two rational functions of degree greater than $1$ with the property that $\tilde{\mu}_f=\tilde{\mu}_g$, where $\tilde{\mu}_f$ and $\tilde{\mu}_g$ are the measures induced on $C$ by the dynamical systems $(\bP^1, f)$ respectively $(\bP^1, g)$ through the canonical projection maps of $C$ onto each of the two  coordinates of $\bP^1\times \bP^1$. The results of this section are valid when $C$, $f$ and $g$ are defined over $\C$ (thus not necessarily over a number field), as long as $\tilde{\mu}_f=\tilde{\mu}_g$. We  prove the following crucial result, which will be used later in the proof of Theorem \ref{main result}. We thank one of the referees for pointing out that we needed a strengthening of our previous version of the following result in order to apply it correctly in the proof of Theorem~\ref{main result}.
  
\begin{theorem}\label{measure to periodicity}
Let  $f$ and $g$ be rational functions defined over $\C$ with degrees greater than one, and assume $f(x)$ is not exceptional. Let $C\subset \mathbb{P}^1\times\mathbb{P}^1$ be an irreducible curve defined over $\C$ which projects dominantly onto both coordinates, and let $\tilde{\mu}_f$ and $\tilde{\mu}_g$ be the induced measures on $C$ by the dynamical systems $(\bP^1, f)$ and $(\bP^1, g)$. Suppose that $\tilde{\mu}_f=\tilde{\mu}_g$ and also suppose that there is a point $(x_0, y_0)\in C$ such that: 
\begin{itemize}
\item $x_0$ is a repelling fixed  point of $f$ and $y_0$ is fixed by $g$; and 
\item there is a non-constant holomorphic germ $h(x)$, with $h(x_0)=y_0$ and $(x, h(x))\in C$ for all $x$ in a small neighbourhood of $x_0$.
\end{itemize}
Then $C$ is fixed by the endomorphism $(x,y)\mapsto (f^n(x),g^m(y))$ of $\bP^1\times \bP^1$ for some positive integers $n$ and $m$, and in particular $\deg (f)^n=\deg(g)^m$. 
\end{theorem}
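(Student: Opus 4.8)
The plan is to combine the equality $\tilde{\mu}_f=\tilde{\mu}_g$ with Levin's classification (\cite[Theorem~1]{Levin}) of the rational maps that carry an infinite nontrivial family of symmetries on their Julia set. First I would extract two consequences of $\tilde{\mu}_f=\tilde{\mu}_g$. Since $\operatorname{supp}\mu_f=J_f$ and $\operatorname{supp}\mu_g=J_g$, comparing the supports of $\tilde{\mu}_f$ and $\tilde{\mu}_g$ gives $C\cap(J_f\times\bP^1)=C\cap(\bP^1\times J_g)$; restricting this to the branch of $C$ which, near $(x_0,y_0)$, is the graph of $h$ (the branch furnished by the hypothesis), one gets $x\in J_f\iff h(x)\in J_g$ for $x$ near $x_0$. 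Comparing the measures themselves along the same branch shows that $h$ carries $\mu_f$ to a fixed positive multiple of $\mu_g$ locally. A preliminary reduction is then that $y_0$ must in fact be a \emph{repelling} fixed point of $g$: we already know $g'(y_0)\neq 0$ (as $y_0\in J_g$ is not superattracting), so $y_0$ is repelling, parabolic, or Cremer; but the local law of $\mu_g$ at a parabolic or Cremer fixed point decays faster than any power of the radius, whereas at the repelling point $x_0$ one has $\mu_f(B(x_0,\epsilon))\asymp\epsilon^{\log\deg(f)/\log|f'(x_0)|}$, and the local transport property would force this same power law onto $\mu_g$ near $y_0$ --- a contradiction. (The same circle of ideas shows in passing that $g$ is not exceptional either, since an exceptional $g$ has smooth $J_g$ and then $h$ would confine a piece of $J_f$ to a smooth curve and force $J_f$ smooth.)

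The heart of the argument is to manufacture a nontrivial normal family of symmetries of $J_f$ at $x_0$. Since $x_0$ is repelling for $f$ and (by the reduction) $y_0$ is repelling for $g$, for $a\geq 1$ let $f^{-a}_{\mathrm{loc}}$ and $g^{-a}_{\mathrm{loc}}$ denote the inverse branches of $f^a$ and $g^a$ fixing $x_0$ and $y_0$; assume for now that $h'(x_0)\neq 0$ and let $h^{-1}$ be the local inverse of $h$ at $y_0$. Put $b(a):=\lceil a\log|f'(x_0)|/\log|g'(y_0)|\rceil$ and set
$$\sigma_a:=h^{-1}\circ g^{b(a)}\circ h\circ f^{-a}_{\mathrm{loc}},$$
a holomorphic germ fixing $x_0$. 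Each factor carries a germ of one Julia set to a germ of another --- using the displayed set equality for $h$ and $h^{-1}$, and the $f$- resp.\ $g$-invariance of $J_f$ resp.\ $J_g$ for the iterates --- so $\sigma_a$ is a symmetry of $J_f$ at $x_0$ in the sense of Subsection~\ref{sym}; condition (ii) there is vacuous because $f$ is not exceptional, so $J_f$ is not smooth. Using the linearizers at $x_0$ and $y_0$, one checks that for a small enough fixed $r$ the whole family $\{\sigma_a\}_{a\geq a_0}$ is defined and uniformly bounded on $B(x_0,r)$ (the contraction factor $\asymp|f'(x_0)|^{-a}$ of $f^{-a}_{\mathrm{loc}}$ is compensated by the expansion factor $|g'(y_0)|^{b(a)}\asymp|f'(x_0)|^{a}$ of $g^{b(a)}$), hence the family is normal; and $\sigma_a'(x_0)=g'(y_0)^{b(a)}f'(x_0)^{-a}$ has modulus in the fixed interval $[1,|g'(y_0)|]$, so no limit function of the family is constant. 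Thus $\{\sigma_a\}$ is a nontrivial normal family of symmetries of $J_f$.

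Now I would run the dichotomy. If the germs $\sigma_a$ ($a\geq a_0$) were pairwise distinct, then $\{\sigma_a\}$ would be an \emph{infinite} nontrivial family of symmetries of $J_f$, whence by \cite[Theorem~1]{Levin} $f$ would be post-critically finite with parabolic orbifold, i.e.\ exceptional --- contradicting the hypothesis. So $\sigma_{a_1}=\sigma_{a_2}$ for some $a_2>a_1\geq a_0$. Cancelling $h^{-1}$ on the left, then $f^{-a_1}_{\mathrm{loc}}$ on the right, then the branch $g^{-b(a_1)}_{\mathrm{loc}}$ on the left, one obtains $h\circ f^{N}_{\mathrm{loc}}=g^{M}\circ h$ on a neighbourhood of $x_0$, with $N=a_2-a_1>0$ and $M=b(a_2)-b(a_1)\geq 0$; moreover $M>0$, for $M=0$ would give $h\circ f^{N}_{\mathrm{loc}}=h$, which after transporting through the linearizer of $f$ at $x_0$ forces $h$ to be constant, contrary to hypothesis. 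Reading $h\circ f^{N}_{\mathrm{loc}}=g^{M}\circ h$ along the graph of $h$ shows that $(f^N\times g^M)$ maps an arc of $C$ into $C$; since $(f^N\times g^M)(C)$ and $C$ are irreducible curves, they coincide, so $C$ is fixed by $(x,y)\mapsto (f^N(x),g^M(y))$. Finally, computing the degree of the self-map $(f^N\times g^M)|_C$ of $C$ through the projection $\pi_1$ gives $\deg(f)^N$, and through $\pi_2$ gives $\deg(g)^M$, so $\deg(f)^N=\deg(g)^M$.

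The step I expect to be the main obstacle is the preliminary reduction together with the normality estimate it enables: proving that $y_0$ is genuinely repelling for $g$ (so that $g$ may be linearized at $y_0$ and the size estimate for $\{\sigma_a\}$ goes through) requires a somewhat delicate comparison of the local scaling laws of $\mu_f$ and $\mu_g$ at fixed points of different dynamical type. A secondary technical point is the ramified case $h'(x_0)=0$: there $h^{-1}$ must be replaced by a local multivalued inverse (equivalently, one works in a uniformizer on the graph-of-$h$ branch of $C$), and one checks that Levin's theorem still applies to the single-valued branches of the resulting symmetries; the rest of the argument is unchanged, and the relation $h\circ f^{N}_{\mathrm{loc}}=g^{M}\circ h$ extracted in the ``finitely many distinct $\sigma_a$'' case makes sense verbatim.
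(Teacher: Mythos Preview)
Your proposal is correct and follows essentially the same route as the paper: build a normal family of local symmetries of $J_f$ out of iterates of $f$, $g$, and the branch $h$, invoke Levin's theorem to force a coincidence among them, extract a germ identity $h\circ f^N=g^M\circ h$, and conclude that $(f^N,g^M)$ fixes $C$. The form of your family $\sigma_a=h^{-1}\circ g^{b(a)}\circ h\circ f^{-a}_{\mathrm{loc}}$ is a harmless variant of the paper's $f^{n_\ell}\circ\tilde g^{-m_\ell}$ with $\tilde g:=h^{-1}\circ g\circ h$, and the paper's choice of exponents so that $\lambda_1^{n_\ell}/\lambda_2^{m_\ell/j}\to 1$ plays the same role as your $b(a)$.

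The one place where the paper is substantially cleaner is exactly the step you flag as the main obstacle, namely showing that $y_0$ is repelling. The paper does \emph{not} compare local dimensions. Instead it first records that $\tilde\mu_f=\tilde\mu_g$ forces the exact scaling $\mu_f(\tilde g(A))=\deg(g)\cdot\mu_f(A)$ for small Borel $A$ near $x_0$ on which $\tilde g$ is injective (this is just the transformation rule \eqref{measure growth} for $\mu_g$ transported through $h$). If $|\lambda_2|=1$, fix $j_0$ with $\deg(g)^{j_0}>\deg(f)$; since $|\tilde g'(x_0)|=1$ while $|f'(x_0)|>1$, one can choose a small neighbourhood $A$ of $x_0$ with $\tilde g^{j_0}(A)\subset f(A)$, and then
\[
\deg(g)^{j_0}\mu_f(A)=\mu_f(\tilde g^{j_0}(A))\le \mu_f(f(A))=\deg(f)\,\mu_f(A),
\]
forcing $\mu_f(A)=0$, which contradicts $x_0\in\operatorname{supp}\mu_f$. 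This two-line argument replaces your appeal to scaling laws of $\mu_g$ at parabolic or Cremer points and works uniformly in the ramified case $h'(x_0)=0$ (the paper simply takes $\tilde g$ to be a branch of $h^{-1}\circ g\circ h$ with multiplier $\lambda_2^{1/j}$, $j$ the local degree of $h$). A small side remark: your claim that condition~(ii) in the definition of symmetry is vacuous because ``$J_f$ is not smooth'' is not quite right (there exist non-exceptional maps with $J_f$ a circle); but the measure-transport identity you already stated gives condition~(ii) directly, so this does not affect the argument.
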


\begin{proof}
We prove this theorem following an idea of Levin from  \cite{Levin}, where he studied the symmetries of the Julia set of a rational function.  The following proposition is central to our proof.

\begin{proposition}\label{identical germ}
With the same assumptions as in Theorem \ref{measure to periodicity}, there exist integers $n, m\geq 1$, such that $f^n=h^{-1}\circ g^m\circ h$ as germs at $x_0$. 
\end{proposition}

\begin{proof}[Proof of Proposition~\ref{identical germ}.] 
As $f$ is repelling at $x_0$, the multiplier $\lambda_1:=f'(x_0)$ have absolute value greater than one, i.e., $x_0$ is in the support of $\mu_f$ hence $(x_0, y_0)$ is in the support of $\tilde \mu_f=\tilde \mu_g$. Consequently $y_0$ is in the Julia set $J_g$ (the support of $\mu_g$), and then we have $|\lambda_2|\geq 1$ for  the multiplier $\lambda_2:=g'(y_0)$.   As $h(x)$ is not a constant, we define the germ $\tilde{g}$ at $x_0$ as a branch of $\tilde{g}(x):=h^{-1}\circ g\circ h(x)$, which has multiplier $\sqrt[j]{\lambda_2}=\tilde{g}'(x_0)$ where $j$ is the local degree of $h(x)$ at $x_0$. 

First, we show that $|\lambda_2|>1$. Suppose this is not true, then we have $|\lambda_2|=1$. Fix an integer $j_0$ with $\deg(g)^{j_0}>\deg(f)$. As $\tilde \mu_f =\tilde \mu_g$, since $\mu_f(x_0)=0$ and from (\ref{measure growth}), one has  $\mu_f(\tilde g(B))=\deg(g)\cdot \mu_f(B)$ for any measurable set $B$ in a small neighbourhood of $x_0$ with $\tilde{g}$ injective on $B$.  Then for any small neighbourhood $A$ of $x_0$ with both $f$ and $\tilde g$ injective on $A$, one has 
\begin{equation}\label{measure degree growth}
\mu_f(f(A))=\deg(f)\cdot \mu_f(A)\textup{ and } \mu_f(\tilde g(A))=\deg(g)\cdot \mu_f(A)
\end{equation}

Moreover, since $|\tilde g'(x_0)|=1$ and $|f'(x_0)|>1$, we can pick a very small neighbourhood $A$ of $x_0$ with $\tilde g^{j_0}(A)\subset f(A).$ Then one has 
   $$\mu_f(\tilde g^{j_0}(A))=\deg(g)^{j_0}\cdot\mu_f(A)\leq \mu_f(f(A))=\deg(f)\cdot \mu_f(A).$$
Because $\deg(g)^{j_0}>\deg(f)$, we have $\mu_f(A)=0$, which contradicts the fact that $x_0$ is in the support of $\mu_f$. So we have $|\lambda_2|>1$.

As both $|\lambda_1|$ and $|\lambda_2|$ are greater than one, there are invertible holomorphic maps $h_1$ and $h_2$ from a neighbourhood of $x_0$ to $\mathbb{C}$ with $h_1(x_0)=h_2(x_0)=0$, which also satisfy 
   $$h_1\circ f \circ h_1^{-1}(z)=\lambda_1 \cdot z, ~ h_2\circ \tilde{g} \circ h_2^{-1}(z)=\sqrt[j]\lambda_2\cdot z, $$
i.e., in a neighbourhood of $x_0$, both $f$ and $\tilde{g}$ are conjugate to linear functions. 

We pick a sequence of pairs of positive integer $\{(n_\ell, m_\ell)\}_{\ell\geq1}$ with $n_\ell, m_\ell\to \infty$ as $\ell\to \infty$ such that  
    $$\lim_{\ell\to\infty} \frac{\lambda_1^{n_\ell}}{\sqrt[j]\lambda_2^{m_\ell}}=1.$$
Hence the germs  
\begin{equation*}
\begin{split}
f^{n_\ell}\circ \tilde{g}^{-m_\ell}(x) & =[h_1^{-1}\circ (h_1\circ f\circ h_1^{-1})^{n_\ell}\circ h_1]\circ [h_2^{-1} \circ (h_2\circ \tilde{g}\circ h_2^{-1})^{-m_\ell}\circ h_2(x)]\\
 & =h_1^{-1}\circ (h_1\circ f\circ h_1^{-1})^{n_\ell}\circ (h_1\circ h_2^{-1}) \circ (h_2\circ \tilde{g}\circ h_2^{-1})^{-m_\ell}\circ h_2(x)\\
 &=h_1^{-1}\left(\lambda_1^{n_\ell}\cdot h_1\circ h_2^{-1}\left(\frac{h_2(x)}{\sqrt[j]\lambda_2^{m_\ell}}\right)\right)
\end{split}
\end{equation*}
are well defined on $B(x_0, r)$ for some small $r>0$ and all $\ell\geq 1$. We can pick a sufficiently small $r>0$ such that $f^{n_\ell}\circ \tilde{g}^{-m_\ell}|_{B(x_0, r)}$ is injective and $f^{n_\ell}\circ \tilde{g}^{-m_\ell}(B(x_0, r))$ is contained in a small neighbourhood of $x_0$ for all $\ell\geq 1$. Hence $\mathcal{H}:=\{f^{n_\ell}\circ \tilde{g}^{-m_\ell}\}_{\ell\geq 1}$ is a normal family on $B(x_0, r)$. Moreover, as $\tilde{\mu}_f=\tilde{\mu}_g$, then $f^{n_\ell}\circ \tilde{g}^{-m_\ell}$ not only preserves $J_f$ but also $\mu_f$ (up to a scalar multiple) restricted on $B(x_0, r)$, i.e. it is a symmetry on $J_f$.  The multiplier $(f^{n_\ell}\circ \tilde{g}^{-m_\ell})'(x_0)=\lambda_1^{n_\ell}/\sqrt[j]{\lambda_2}^{m_\ell}$ of $f^{n_\ell}\circ \tilde{g}^{-m_\ell}$ at the fixed point $x_0$ tends to $1$ as $\ell$ tends to infinity; thus any limit function of $\mathcal{H}$ would have derivative $1$ at $x_0$ and therefore it will not be a constant function. Consequently, $\mathcal{H}$ contains only finitely many distinct functions on $B(x_0, r)$, otherwise it would contradict to the fact that $f$ is not a post-critically finite map with parabolic orbifold; see  \cite[Theorem 1]{Levin}. We pick integers $\ell_1, \ell_2>1$ with 
   $$n_{\ell_1}>n_{\ell_2}, ~ m_{\ell_1}>m_{\ell_2}, ~ f^{n_{\ell_1}}\circ \tilde{g}^{-m_{\ell_1}}=f^{n_{\ell_2}}\circ \tilde{g}^{-m_{\ell_2}}.$$
Hence on a neighbourhood of $x_0$, one has
   $$f^{n_{\ell_2}}\circ f^{n_{\ell_1}-n_{\ell_2}}=f^{n_{\ell_2}}\circ \tilde{g}^{m_{\ell_1}-m_{\ell_2}},$$
 and then post-compose the germ $f^{-n_{\ell_2}}$ at $x_0$ fixing the point $x_0$ on both sides of  the above equation. This concludes the proof of Proposition~\ref{identical germ}.
\end{proof}

Now consider the analytic equation
   $$h(x)-y=0$$
 on a neighbourhood of $(x_0, y_0)\in \mathbb{P}_\C^1\times \mathbb{P}_\C^1$. The zero set of this equation is an analytic curve passing though the point $(x_0, y_0)$. For $x$ close to $x_0$,  $(x,h(x))$ are points on $C$. Let $n, m$ be integers in Proposition \ref{identical germ}; then $f^n(x)=h^{-1}\circ g^m\circ h(x)$ for all $x$ near $x_0$, i.e.  $ f^n(x)= h^{-1} (g^m(h(x)))$. 

Hence for points $x$ close to $x_0$, the points $(f^n, g^m)(x, h(x))$, which are points on $(f^n, g^m)(C)$  are on the zero set of $h(x)-y=0$. Finally, as both $C$ and $(f^{n}, g^{m})(C)$ share an analytic curve in a neighbourhood of $(x_0, y_0)$, they must be identical. So $C$ is fixed by the endomorphism $(x,y)\mapsto (f^n(x), g^m(y))$ of $\bP^1\times\bP^1$. 

We are left to show that $\deg(f)^n=\deg(g)^m$. There are various ways of proving this last statement, including (as suggested by one of the referees) by  exploiting the equality of degrees for various maps between $C$ and $\bP^1$. We include (as an alternative proof) an analytic argument in the spirit of the rest of our proof of Theorem~\ref{measure to periodicity}. So, we pick a sufficiently small $r>0$, and let $A=B(x_0, r)$. Then both $f^n|_A$ and $\tilde{g}^m|_A$ are injective. From (\ref{measure degree growth}), we have $\mu_f(f^n(A))=\deg(f)^n\cdot \mu_f(A)$, and $\mu_f(\tilde g^m(A))=\deg(g)^m\cdot \mu_f(A)$. Moreover, as $f^n$ and $\tilde g^m$ are identical as germs at $x_0$ and  $\mu_f(A)$ is not zero ($x_0$ is in $J_f$ and the support of $\mu_f$ is $J_f$), we have $\deg(f)^n=\deg(g)^m$. 
\end{proof}

\section{Conclusion of the proof of Theorems~\ref{main result}~and~\ref{general DMM result}}
\label{section proof of main results}

In this section we prove both Theorems~\ref{main result} and \ref{general DMM result}. We first prove the converse implication in Theorems~\ref{main result} and \ref{general DMM result}; i.e., we prove the following result.

\begin{lemma}\label{the converse lemma}
Let $f$ and $g$ be rational functions of same degree $d>1$ defined over $\C$, and let $C\subset \bP^1_\C\times \bP^1_\C$ be a curve which projects dominantly onto both coordinates. If $C$ is preperiodic under the endomorphism $\Phi$ of $\bP^1\times \bP^1$ given by $\Phi(x,y)= (f(x), g(y))$, then $C$ contains infinitely  many preperiodic points.
\end{lemma}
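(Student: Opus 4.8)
The plan is to reduce to the case where $C$ is periodic and then to exploit that $f$ has infinitely many periodic points in $\bP^1$: over each such point the fibre of $C$ is a finite set that is carried into itself by a suitable iterate of $\Phi$, and hence consists of $\Phi$-preperiodic points.

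Since $C$ is preperiodic there are integers $n>m\ge 0$ with $\Phi^m(C)=\Phi^n(C)$; set $C':=\Phi^m(C)$ and $\Psi:=\Phi^{n-m}=(f^{n-m},g^{n-m})$. As $\Phi=f\times g$ is a product of two finite morphisms $\bP^1\to\bP^1$, it is finite, so $C'$ is an irreducible curve, and it projects dominantly onto both coordinates because $f^m$ and $g^m$ are surjective. Moreover $\Psi(C')=\Phi^{n-m}(\Phi^m(C))=\Phi^n(C)=C'$, so $C'$ is periodic under $\Psi$. Now $\Phi^m|_C:C\to C'$ is a finite surjective morphism of projective curves, and any $P\in C$ for which $\Phi^m(P)$ has finite $\Phi$-orbit itself has finite $\Phi$-orbit; hence it is enough to find infinitely many $\Phi$-preperiodic points on $C'$, since their preimage under $\Phi^m|_C$ is then an infinite set of $\Phi$-preperiodic points of $C$.

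Next I would invoke the classical fact that a rational function of degree $>1$ has infinitely many periodic points in $\bP^1$ (see \cite{Milnor:book}); in particular $f$ does. Fix such a point $x_0$ with $f^b(x_0)=x_0$, and put $a:=b(n-m)$, so that $\Psi^a=\Phi^a=(f^a,g^a)$ carries the vertical line $L_{x_0}:=\{x_0\}\times\bP^1$ into itself. The set $Z:=L_{x_0}\cap C'$ is nonempty since $C'$ surjects onto the first coordinate, and it is finite since $C'$, being dominant onto the first coordinate, is not the line $L_{x_0}$. From $\Psi(C')=C'$ we obtain
\[
\Psi^a(Z)\subseteq\Psi^a(C')\cap L_{x_0}=C'\cap L_{x_0}=Z,
\]
so $\Psi^a$ restricts to a self-map of the finite set $Z$; consequently every point of $Z$ has finite $\Psi^a$-orbit, hence finite $\Phi$-orbit (a point has finite $\Phi$-orbit if and only if it has finite $\Phi^k$-orbit, for any fixed $k\ge 1$). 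Letting $x_0$ range over the infinitely many periodic points of $f$ yields infinitely many $\Phi$-preperiodic points of $C'$, pairwise distinct since they have distinct first coordinates, which is what was needed.

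This implication is essentially formal and there is no serious obstacle. The only non-bookkeeping ingredient is the infinitude of periodic points of a rational map of degree $>1$. The steps that warrant a careful check are that the fibres $Z$ are genuinely finite and nonempty --- which is precisely where the hypothesis that $C$ projects dominantly onto both coordinates enters --- and that preperiodicity for $\Phi$ is unaffected by passing to the iterate $\Psi=\Phi^{n-m}$. We remark that the argument does not actually use the hypothesis $\deg f=\deg g$.
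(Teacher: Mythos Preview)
Your argument is correct and uses essentially the same idea as the paper: exploit the infinitude of $f$-preperiodic points together with the finiteness of the fibres of $\pi_1$ over the (finitely many) curves in the $\Phi$-orbit of $C$ to trap orbits in finite sets. The paper's execution is marginally more direct---it does not first pass to a periodic curve $C'$ but argues straight away that for any $(x_1,x_2)\in C$ with $x_1$ $f$-preperiodic, the orbit $\{\Phi^n(x_1,x_2)\}$ is contained in the finite set $\bigcup_n \pi_1^{-1}(f^n(x_1))\cap C_n$, where $C_n:=\Phi^n(C)$.

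One small slip to fix: with $\Psi=\Phi^{n-m}$ and $a=b(n-m)$ one has $\Psi^b=\Phi^a$, not $\Psi^a=\Phi^a$. What you actually need (and have) is that $\Phi^a$ fixes both $C'$ (since $(n-m)\mid a$) and $L_{x_0}$ (since $b\mid a$); replacing $\Psi^a$ by $\Phi^a$ throughout that step makes the display correct and the rest goes through unchanged.
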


We note that the hypothesis that $C$ is preperiodic under the endomorphism $\Phi$ (and also $C$ is neither horizontal nor vertical) automatically yields the equality of degrees for the two rational functions $f$ and $g$. Also, we remark that an extension of Lemma~\ref{the converse lemma} to polarizable endomorphisms of projective varieties was proven by  Fakhruddin \cite{Fakhruddin}.

\begin{proof}[Proof of Lemma~\ref{the converse lemma}.] As $\Phi$ projects dominantly and $f:\P^1\to \P^1$ has infinitely many preperiodic points on $\P^1$, there are infinitely many points $(x_1, x_2)$ on the curve $C$ with $x_1$ preperiodic under the action of $f$. We want to show that $x_2$ is preperiodic under $g$. As $x_1$ and $C$ is preperiodic under $f$ and respectively  $\Phi$, there are only finitely many distinct points $f^n(x_1)$ and respectively curves $C_n:=\Phi^n(C)$ for $n\geq 1$. Hence there are only finitely many distinct points $(f^n(x_1), g^n(x_2))\in {\pi_1^{-1}}|_{C_n}(f^n(x_1))$ for $n\geq 1$, i.e. $\{g^n(x_2)\}_{n\geq 1}$ is a finite set. Consequently, $x_2$ is preperiodic under $g$. 
\end{proof}

\begin{proof}[Proof of Theorem~\ref{main result}.]
First, we note that the converse implication was already proven in Lemma~\ref{the converse lemma}. 

Secondly, we prove the direct implication in Theorem~\ref{main result} under the assumption that the curve $C$ and the rational functions $f_1, f_2$ are defined over $\Qbar$. Using  \cite[Theorem~1.8]{Mimar}, we get that for each point $(x_1,x_2)\in C(\Qbar)$, we have that $x_1$ is preperiodic for $f_1$ if and only if $x_2$ is preperiodic for $f_2$. In particular, $C$ contains infinitely many preperiodic points.

Since all but finitely many periodic points of a rational map are repelling, and also, at all but finitely many points the projection map on the first coordinate $C\to \bP^1$ is unramified, then we can find a point $(x_0,y_0)\in C$ such that
\begin{itemize}
\item[(a)] $x_0$ is a periodic repelling point for $f_1$; and
\item[(b)] there is a non-constant holomorphic germ $h_0(x)$, with $h_0(x_0)=y_0$ and $(x,h_0(x))\in C$ for all $x$ in a small neighborhood of $x_0$.
\end{itemize}

Note that since $x_0$ is fixed by $f_1$, then $y_0$ must be preperiodic under $f_2$. At the expense of replacing $f_1$ by $F_1:=f_1^{r_1}$ and $f_2$ by $F_2:=f_2^{r_2}$ for some positive integers $r_1,r_2$, and also replacing $C$ by $C_1:=\Phi_{1}(C)$ where $\Phi_{1}$ is the endomorphism of $\bP^1\times \bP^1$ given by $\Phi_{1}(x_1,x_2):=(F_1(x_1),F_2(x_2))$, we can assume that $C_1$ contains a fixed point $(x_{0,1}, x_{0,2})$ of $\Phi_1$  such that $x_{0,1}:=F_1(x_0)$ is a repelling fixed point for $F_1$ (while $x_{0,2}:=F_2(y_0)$ is fixed by $F_2$), and  also ensure that condition~(b) above is satisfied for an analytic germ  in a neighborhood of $(x_{0,1},x_{0,2})\in C_1$, i.e., for the analytic germ $h:=F_2\circ h_0\circ F_1^{-1}$ at $x_{0,1}$, we have $h(x_{0,1})=x_{0,2}$ and $(x,h(x))\in C_1$ for all $x$ in a small neighbourhood of $x_{0,1}$.

Clearly, also $C_1$ contains infinitely many preperiodic points under the action of $\Phi_1$.  
For $i=1,2$, we let $\pi_i:C_1\lra \bP^1$ be the corresponding projection of $\bP^1\times \bP^1$ onto each coordinate, restricted to $C_1$.   We let $\mu_i$ be the invariant measure on $\bP^1(\C)$ corresponding to $F_i$ for $i=1,2$; then we let $\tilde{\mu}_i:=(\pi_i^*\mu_i)/\deg(\pi_i)$ be the corresponding measures on $C_1(\C)$. According to  
Proposition~\ref{equal measures proposition}, we get that $\tilde{\mu}_1=\tilde{\mu}_2$. Therefore the hypotheses of Theorem~\ref{measure to periodicity} are met. Then there exist positive integers $s_1,s_2$ such that $C_1$ is fixed by the endomorphism $(x_1, x_2)\mapsto (F_1^{s_1}(x_1), F_2^{s_2}(x_2))$. In particular, $\deg(F_1^{s_1})=\deg(F_2^{s_2})$. 

Now consider the curve $C$ and the rational functions $G_i:=f_i^{r_is_i}$ for $i=1,2$; then $G_1$ and $G_2$ are of the same degree. Then we run the same argument as above: we replace $G_1$ and $G_2$ by iterates of them, and then replace $C$ by an iterate of it under the action of $(x_1,x_2)\mapsto (G_1(x_1),G_2(x_2))$ such that assumptions in Theorem~\ref{measure to periodicity} are satisfied. Since already $G_1$ and $G_2$ have the same degree, we conclude that $C$ is preperiodic under the action of $(x,y)\mapsto (f_1^{r_1s_1}(x_1), f_2^{r_2s_2}(x_2))$ and $\deg(f_1^{r_1s_1})=\deg(f_2^{r_2s_2})$.

Finally, we deal with the general case of the direct implication from Theorem~\ref{main result}. All we need to prove is that the hypotheses of Theorem~\ref{measure to periodicity} are met (after perhaps replacing each $f_i$ by an iterate and also replacing $C$ by its image under a map of the form $(x_1,x_2)\mapsto (f_1^{r_1}(x_1), f_2^{r_2}(x_2))$). So, all we need to show is that $\tilde{\mu}_1=\tilde{\mu}_2$ where the measures $\tilde{\mu}_i$ on $C(\C)$ are the probability measures on $C(\C)$ induced by the pullbacks of the invariant measures under $f_i$ on $\bP^1_\C$ through the projection maps.  

In this case, let $K$ be a finitely generated subfield of $\C$ such that $f_1$, $f_2$ and $C$ are defined over $K$, and let $\Kbar$ be a fixed algebraic closure of $K$ in $\C$. We know there exists an infinite sequence $S:=\{(x_{1,n}, x_{2,n})\}\subset C(\C)$ such that each $x_{i,n}$ is a preperiodic point for $f_i$ for $i=1,2$ and for $n\ge 1$. Then $f_1$ and $f_2$ are base changes of endomorphisms $f_{1,K}$ and $f_{2,K}$ of $\bP^1_K$; similarly, $S$ is the base change of a subset $S_K\subset C(\Kbar)$. Without loss of generality, we may assume $S_K$ is closed under the action of ${\rm Gal}(\Kbar/K)$ and therefore we can view $S_K$ as a union of closed points in $\bP^1_K\times \bP^1_K$.  We can further extend $f_{i,K}$ to endomorphisms
$$f_{i,U}:  \bP^1_U\lra \bP^1_U$$
over a variety $U$ over $\Q$ of finite type and with function field $K$. 
For each geometric point $t\in U(\Qbar)$, the objects $(f_{1,U}, f_{2,U}, S_U)$ have reductions
$(f_{1,t}, f_{2,t}, S_t)$ such that $S_t$ consists of points with both coordinates preperiodic under the action of $f_{1,U}$, respectively of $f_{2,U}$. We also let $\tilde{\mu}_{i,t}:=\pi_i^*\mu_{f_{i,t}}/\deg(\pi_i)$ (for $i=1,2$) be the probability measures on $C_t$ obtained as pullback through the usual projection map onto each coordinate restricted to $C_t$ of the invariant measures on $\bP^1_\C$ corresponding to each $f_{i,t}$. 
In \cite[Theorem~4.7]{Yuan-Zhang-2} (see also \cite[Lemma~3.2.3]{Yuan-Zhang-1}), Yuan and Zhang prove that the set $S_t$ is still infinite for all the $\Qbar$-points of a dense open subset $U_0\subseteq U$; we note that the results of \cite{Yuan-Zhang-1} were recently published in \cite{Yuan-Zhang-published}, while \cite{Yuan-Zhang-2} has been updated to \cite{Yuan-Zhang-arxiv} using slightly different arguments. Thus, as proven in Proposition~\ref{equal measures proposition}, we conclude that $\tilde{\mu}_{1,t}=\tilde{\mu}_{2,t}$ for each $t\in U_0(\Qbar)$. Since $U_0(\Qbar)$ is dense in $U(\C)$ with respect to the usual archimedean topology, while the measures $\tilde{\mu}_{i,t}$ vary continuously with the parameter $t$ (since from the construction,  the potential functions of these measures vary continuously with the coefficients of $f_{i,t}$), we conclude that $\tilde{\mu}_{1,t}=\tilde{\mu}_{2,t}$ for all points in $U(\bC)$ including the point corresponding to original embedding $K\subset \C$. Thus we have shown that the measures $\tilde{\mu}_1,\tilde{\mu}_2$ induced on $C$ by the action of $f_1$ and $f_2$ on $\bP^1_\C$ are equal. Arguing identically as before, i.e., replacing $f_i$ by iterates and also replacing $C$ by its image under a suitable map $(x_1,x_2)\mapsto (f_1^{r_1}(x_1),f_2^{r_2}(x_2))$, we can guarantee that the hypotheses of Theorem~\ref{measure to periodicity} are met and thus conclude the proof of Theorem~\ref{main result}.
\end{proof}

\begin{proof}[Proof of Theorem~\ref{general DMM result}.]
First we note that the converse implication was proven in Lemma~\ref{the converse lemma}; note also that the tangent subspaces at \emph{each} preperiodic point on the preperiodic curve $C$ is also preperiodic in the corresponding Grassmanian. 

So, from now on, we assume that $C$ contains infinitely many preperiodic points verifying the hypotheses of Theorem~\ref{general DMM result} and we prove next that $C$ is preperiodic. First we note that if the curve does not project dominantly onto both coordinates, then the result is trivial, since in this case it must be that $C$ projects to one coordinate to a preperiodic point under the corresponding map $f_i$.

Let $d:=\deg(f_i)$ (for $i=1,2$). If one of the maps $f_i$ is not exceptional, then the conclusion is provided by Theorem~\ref{main result}; in this case we do not require the hypothesis regarding the action on the tangent subspaces of the preperiodic points.

Assume now that $f_1$ is conjugate to $z^{\pm d}$ or $\pm T_d(z)$. The hypothesis that $C$ contains infinitely many preperiodic points (once again we do not require the extra hypothesis regarding the induced action on the tangent spaces) yields, according to  Proposition~\ref{equal measures proposition}, the equality of the following measures on $C(\C)$: 
\begin{equation}
\label{same measure monomial or chebyshev}
\deg(\pi_2)\cdot \pi_1^*\mu_1=\deg(\pi_1)\cdot \pi_2^*\mu_2, 
\end{equation}
where the $\pi_i$'s are the corresponding projection maps restricted on $C$ to each coordinate of $\bP^1\times \bP^1$,  while $\mu_i$ is the invariant measure on $\bP^1_\C$ corresponding to $f_i$. Then also $f_2$ must be conjugate to $z^{\pm d}$ or $\pm T_d(z)$; see Section \ref{sym}. 

We assume now that both $f_i$ are conjugate to $T_d(z)$; the other cases follow similarly. So, $f_i=\eta_i^{-1}\circ T_d\circ\eta_i$ for some linear transformations $\eta_i$. Then it is sufficient to prove Theorem~\ref{general DMM result} for $f_1$ and $f_2$ replaced by $T_d$ and the curve $C$ replaced by $\Psi_1(C)$, where $\Psi_1:\bP^1\times \bP^1\lra \bP^1\times \bP^1$ is defined by $\Psi_1(x_1, x_2) = (\eta_1(x_1), \eta_2(x_2))$. We let $\nu:\bP^1\lra \bP^1$ be defined by $\nu([a:b])=[a^2+b^2:ab]$, and let $\Psi_2$ be the endomorphism of $\bP^1\times \bP^1$ induced by $\nu\times \nu$. Then an irreducible component $C_1$ of $\Psi_2^{-1}(C)$ contains infinitely many points $(x_1, x_2)$ where each $x_i$ is a preperiodic point for the map $z\mapsto z^d$ on $\bP^1$, i.e. each $x_i$ is a root of unity. Then the classical  Manin-Mumford Conjecture for $\bG_m^2$ proved by Laurent \cite{Laurent}  yields that $C_1$ is a torsion translate of an algebraic subtorus of $\bG_m^2$ and therefore, it is preperiodic under the action of $(x_1,x_2)\mapsto (x_1^d, x_2^d)$ on $\bP^1\times \bP^1$. Thus $C=\Psi_2(C_1)$  is preperiodic under the action of $(f_1, f_2)$. 

Now,  assume $f_1$ is a  Latt\`es map; again using that $\deg(\pi_2)\cdot \pi_1^*\mu_1=\deg(\pi_1)\cdot \pi_2^*\mu_2$ yields that also $f_2$ is a Latt\`es map; see Section \ref{sym}.  So, again at the expense of replacing each $f_i$ by a conjugate of it, and also replacing $C$ by its image under an automorphism of $\bP^1\times \bP^1$, we may and do assume that we have  elliptic curves $E_1$ and $E_2$ defined over $\C$, a coordinatewise projection map $\pi:E_1\times E_2\lra \bP^1\times \bP^1$, and we have a polarizable group endomorphism $\tilde{\Phi}$ of $E_1\times E_2$ (note that $\deg(f_1)=\deg(f_2)>1$) such that $\Phi\circ \pi = \pi\circ \tilde{\Phi}$, where we recall that $\Phi:\bP^1\times \bP^1\lra \bP^1\times \bP^1$ is given by $\Phi(x_1, x_2) = (f_1(x_1), f_2(x_2))$. Then our hypotheses imply the existence of a curve $C_1\subset E_1\times E_2$ (which is a irreducible component of $\pi^{-1}(C)$) such that 
 $C_1$ contains infinitely many smooth preperiodic points $P$ under the action of $\tilde{\Phi}$, and moreover the tangent space of $C_1$ at $P$ is preperiodic under the induced action of $\tilde{\Phi}$ on  ${\rm Gr}_1\left(T_{E_1\times E_2,P}\right)$ (here we use the fact that the ramified locus for $\pi$ is a finite union of divisors of the form $\{a\}\times E_2$ and $E_1\times \{b\}$, while at any unramified point of $\pi$, we get an induced isomorphism $\pi^*$ on the correspnding tangent spaces). Since $\tilde{\Phi}$ is a polarizable endomorphism of $E_1\times E_2$, then the result follows from \cite[Theorem~2.1]{GTZ}.
\end{proof}


\section{Proof of Theorems~\ref{theorem example} and \ref{same poly acting}}
\label{section examples}

In Subsection~\ref{setting up the proof subsection}, we develop the technical results needed in the proof of Theorem~\ref{theorem example} whose proof is finalized in Subsection~\ref{the proof subsection}. We conclude our paper with the proof of Theorem~\ref{same poly acting} in Subsection~\ref{conclude subsection}.


\subsection{A result in polynomial decomposition}
\label{setting up the proof subsection}

Throughout this subsection, fix $d\geq 2$ and $f(x)=x^d+c\in \C[x]$ that is not
	conjugate to $x^d$ or $\pm T_d(x)$. As explained in Section~\ref{sec:introduction}, this amounts to $c\ne 0$ and also $c\ne -2$ if $d=2$. 
	For every positive divisor
	$\delta$ of $d$, let $f_{\delta}(x)=x^{d/\delta}\circ (x^\delta+c)=(x^\delta+c)^{d/\delta}$. Note that $f_d(x)=f(x)$ while
	$f_1(x)=(x+c)^d$ is conjugate to $f(x)$. Fix $n\in \N$. We prove the following:
	\begin{theorem}\label{thm:1}
		Every non-constant solution
		$A,B\in \C[x]$ of the functional equation:
		$$f^n\circ A=A\circ B$$
		 has the form $A=f^m\circ (x^{\delta}+c)\circ L$ and $B=L^{-1}\circ f_{\delta}^n\circ L$
		 for some integer $m\ge 0$, positive divisor $\delta$ of $d$, and linear polynomial $L$.
	\end{theorem}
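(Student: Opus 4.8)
The plan is to analyze the functional equation $f^n \circ A = A \circ B$ using the theory of polynomial decomposition, exploiting the very special shape $f(x) = x^d + c$. First I would pass to a suitable normalization: by precomposing $A$ with a linear polynomial $L$ we may assume $A$ is monic and has zero coefficient in degree $(\deg A) - 1$, i.e. $A$ is in normal form; correspondingly $B$ becomes $L^{-1} \circ B \circ L$, so it suffices to classify the normalized solutions and then reinsert $L$ at the end. Comparing leading coefficients and degrees in $f^n \circ A = A \circ B$ forces $\deg B = d^n$ (since $\deg f^n = d^n$) and gives a compatibility between the leading coefficients; with $A, f, B$ all essentially monic this is automatic.

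\medskip

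The heart of the argument is an induction on $\deg A$. If $\deg A = 1$ (so $A = L$ after normalization $A = x$), then $B = f^n$, which is the case $m = 0$, $\delta = d$. For the inductive step, the idea is to factor the decomposition $f^n \circ A$ in two ways and apply a Ritt-type uniqueness/exchange principle. Write $f^n \circ A = (x^d + c) \circ f^{n-1} \circ A$; on the other hand $f^n \circ A = A \circ B$. Since $x^d + c$ is a prime (indecomposable) polynomial whose only "ramification" at $\infty$ and at the single finite critical value $c$ is fully understood, one compares this with the decomposition $A \circ B$ of the same polynomial. The key structural input is that the critical points of $f^n \circ A$ are highly constrained: $f$ has a single finite critical point $0$, so the critical points of $f^n \circ A$ lying over finite critical values come from the zeros of $A$, of $f \circ A$, \dots, of $f^{n-1} \circ A$, each with multiplicity governed by powers of $d$. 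This rigidity should force $A$ itself to "start with" a block of the form $(x^\delta + c)$: more precisely, one shows that $A$ can be written as $A_1 \circ (x^\delta + c)$ for some positive divisor $\delta \mid d$, or that $A = f \circ A'$ with $\deg A' < \deg A$, thereby lowering the degree and invoking the inductive hypothesis. I would carry out the case analysis according to whether $d \mid \deg A$ (allowing one to peel off an $f$ from the left of $A$) or not (forcing the $(x^\delta + c)$ block with $\delta$ the appropriate proper divisor). The compatibility $\deg A = d^m \cdot \delta$ emerging from the recursion then matches the claimed form $A = f^m \circ (x^\delta + c) \circ L$, and tracking what happens to $B$ under each peeling step yields $B = L^{-1} \circ f_\delta^n \circ L$, using that $x^{d/\delta} \circ (x^\delta + c) = f_\delta$ and that $f^n \circ (x^\delta + c) = (x^\delta + c) \circ f_\delta^n$ (a direct identity one checks by noting $(x^\delta + c) \circ x^{d/\delta} = x^d + c$ hence $(x^\delta+c)$ semiconjugates $f_\delta$ to $f$).

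\medskip

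I expect the main obstacle to be the bookkeeping in the Ritt-exchange step: controlling how a prime factor of $f$ interacts with an arbitrary decomposition $A \circ B$ of $f^n \circ A$ when $A$ is not yet known to have the special form. The difficulty is that Ritt's second theorem classifies the \emph{ways} a polynomial decomposes only up to near-equivalences (the $x^a \circ x^b r(x)^a$ versus Chebyshev ambiguity), and here one must rule out the Chebyshev-type exchanges using the hypothesis that $f$ is \emph{not} conjugate to $\pm T_d$, and rule out the monomial-type degeneracies using $c \neq 0$. Equivalently, one must show that the only polynomials commuting with (an iterate of) $f$ up to the semiconjugacy by $(x^\delta+c)$ are the expected ones. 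I would handle this by a direct ramification-portrait computation — comparing the multiset of local multiplicities of $f^n \circ A$ over its finite critical values against those forced by a putative decomposition — rather than quoting Ritt's theorem in full generality, since the portrait of $x^d + c$ is simple enough (one totally ramified point of multiplicity $d$ over $c$, plus $\infty \mapsto \infty$) that the analysis is tractable. A secondary, more routine point is verifying the linear-polynomial bookkeeping: that the $L$ absorbed at the start is exactly the one appearing in the final answer and that no extra linear ambiguity survives, which follows from the normal-form reduction since the normalizing linear factor is unique up to a $(\deg A - 1)$-st root of unity, and that ambiguity is exactly the freedom in $L$ allowed by the statement.
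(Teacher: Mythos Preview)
Your overall architecture---induction on $\deg A$, peeling off a copy of $f$ from the left when $d\mid \deg A$ via Engstrom's lemma---matches the paper's, and the base case and the divisible case are fine. The real gap is in the case $d\nmid \deg A$. You treat this as a single case (``force the $(x^\delta+c)$ block''), but it splits into two very different sub-cases depending on $\delta:=\gcd(\deg A,d)$. When $\delta=1$ the paper needs a genuinely separate argument (Inou's result that $f^n$ is then conjugate to $x^r P(x)^{\deg A}$, followed by Mason--Stothers to force a contradiction unless $A$ is linear); your proposal never addresses this, and a ramification-portrait count alone does not obviously substitute for it. When $1<\delta<d$, Engstrom's lemma gives $A=(x^\delta+c)\circ\tilde A$ with the block on the \emph{left}, not $A=A_1\circ(x^\delta+c)$ as you write; the remaining task is to show $\tilde A$ is linear, and this is where the work lies.

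The paper's mechanism for that step is not a direct ramification comparison but a Bilu--Tichy/Siegel-factor analysis of the curve $x^{d/\delta}=\tilde A(B(y))$ arising from the reduced equation, which forces $B$ to factor as $x^{d/\delta}\circ\tilde B$ (or, in the Chebyshev-type alternative with $d/\delta=2$, as $(x^2-2)\circ\tilde B$). One then composes the original equation on the right with $x^{d/\delta}$ (respectively $x^2-2$), re-enters the divisible regime, applies the inductive hypothesis to the new $\hat A$ of strictly smaller degree, and extracts a divisibility contradiction ($d/\delta\mid \eta$ with $\gcd(\eta,d/\delta)=1$). Your sketch does not supply any analogue of this ``show $B$ has a right factor of degree $d/\delta$, then recompose'' trick, and the proposed ramification bookkeeping is not specific enough to produce it. In short: the induction shell is right, but the engine that drives the non-divisible case---Mason--Stothers for $\delta=1$ and the Siegel-factor/Bilu--Tichy step for $1<\delta<d$---is missing.
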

	
	We start with
	the following lemmas:
	\begin{lemma}\label{lem:gap}
	Let $D\geq 2$ and $2\leq m\leq D$ be integers. Assume
	that $P(x)$ has the form $ax^D+bx^{D-m}+\ldots$
	with $ab\neq 0$. Then for every $n\in\N$, 
	$P^n$ has the form $\alpha x^{D^n}+\beta x^{D^n-m}+\ldots$ with $\alpha\beta\neq 0$.
	\end{lemma}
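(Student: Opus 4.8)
The plan is to induct on $n$, the base case $n=1$ being exactly the hypothesis. Before the inductive step I would first record an elementary fact about \emph{ordinary} powers: if $P(x)=ax^{D}+bx^{D-m}+(\text{terms of degree}<D-m)$, then for every integer $k\ge 0$,
\[
P(x)^{k}=a^{k}x^{Dk}+k\,a^{k-1}b\,x^{Dk-m}+(\text{terms of degree}<Dk-m).
\]
I would prove this by factoring $P(x)=ax^{D}\bigl(1+\tfrac{b}{a}x^{-m}+R(x)\bigr)$, where every monomial of $R$ has degree $<-m$, and expanding $(1+u)^{k}$ with $u=\tfrac{b}{a}x^{-m}+R(x)$: the unique contribution in degree $Dk-m$ comes from the linear term $ku$, since $u^{2},u^{3},\dots$ and the tail of $ku$ only involve degrees $<Dk-m$.

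For the inductive step I would write $P^{n+1}=P^{n}\circ P$ and substitute $y=P(x)$ into the inductive hypothesis $P^{n}(y)=\alpha y^{D^{n}}+\beta y^{D^{n}-m}+S(y)$, where $\alpha\beta\neq 0$ and $\deg S\le D^{n}-m-1$. Applying the displayed identity with $k=D^{n}$ shows that $\alpha P(x)^{D^{n}}$ contributes the leading term $\alpha a^{D^{n}}x^{D^{n+1}}$, the term $\alpha D^{n}a^{D^{n}-1}b\,x^{D^{n+1}-m}$, and further terms of degree $<D^{n+1}-m$. It then remains to bound the two other summands: $\beta P(x)^{D^{n}-m}$ has degree $D(D^{n}-m)=D^{n+1}-Dm$, and $S(P(x))$ has degree at most $D(D^{n}-m-1)$, and both of these are strictly less than $D^{n+1}-m$ precisely because $D\ge 2$ (and $m\ge 2>0$). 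Hence $P^{n+1}(x)=\alpha' x^{D^{n+1}}+\beta' x^{D^{n+1}-m}+(\text{terms of degree}<D^{n+1}-m)$ with $\alpha'=\alpha a^{D^{n}}$ and $\beta'=\alpha D^{n}a^{D^{n}-1}b$.

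To finish, I would only need $\alpha'\neq 0$ and $\beta'\neq 0$: the former holds since $\alpha,a\neq 0$, and the latter since, in addition, $b\neq 0$ and $D^{n}\neq 0$ in $\C$. I do not expect a genuine obstacle here; the entire content is that the ``gap width'' $m$ between the top degree and the next nonzero degree is preserved under composition, and the only points requiring care are the degree bookkeeping above and the observation that the positive integer $D^{n}$ is invertible in the coefficient field, so the subleading coefficient cannot accidentally vanish upon iterating.
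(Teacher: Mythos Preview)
Your proof is correct and follows exactly the approach the paper indicates: the paper's proof consists solely of the sentence ``This can be proved easily by induction on $n$,'' and you have carried out precisely that induction with the necessary degree bookkeeping. The key observations you make---that $P(x)^k$ has subleading term $k a^{k-1}b\,x^{Dk-m}$, and that the remaining summands $\beta P(x)^{D^n-m}$ and $S(P(x))$ have degree at most $D^{n+1}-Dm<D^{n+1}-m$ since $D\ge 2$---are exactly what is needed, and the nonvanishing of $\beta'=\alpha D^n a^{D^n-1}b$ is correctly justified.
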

	\begin{proof}
	This can be proved easily by induction on $n$.
	\end{proof}
		
	\begin{lemma}\label{lem:gcd 1}
		If $A,B\in \C[x]$
		are non-constant such that $f^n\circ A=A\circ B$
		and $\gcd(d,\deg(A))=1$ then $A$ is linear.
	\end{lemma}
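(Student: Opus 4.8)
Throughout, suppose for contradiction that $k:=\deg A\ge 2$; the goal is to use $\gcd(k,d)=1$ to reach a contradiction. First normalise: replacing $A$ by $A\circ L$ and $B$ by $L^{-1}\circ B\circ L$ for a suitable affine $L$ (this changes neither the hypotheses nor the conclusion), we may assume $A$ is monic, and then comparing leading terms in $f^{n}\circ A=A\circ B$ gives $\deg B=d^{n}=:D$ and that the leading coefficient of $B$ is a $k$-th root of unity; a further affine change lets us also assume $A$ is \emph{depressed}. Two structural inputs will be used. \textbf{(a)} By Lemma~\ref{lem:gap}, $f^{n}(x)=x^{D}+\beta x^{D-d}+(\text{lower})$ with $\beta\neq 0$ and with no monomial of degree strictly between $D-d$ and $D$; in particular every iterate of $f$ is depressed. \textbf{(b)} Since $f(x)=x^{d}+c$ is a polynomial in $x^{d}$, so is every iterate: $f^{n}(x)=g_{n}(x^{d})$ with $g_{n}(y)=f^{n-1}(y+c)$ monic of degree $d^{n-1}$; by (a) the coefficient of $y^{d^{n-1}-1}$ in $g_{n}$ is $d^{n-1}c\neq 0$, so $g_{n}\neq y^{d^{n-1}}$ and hence $g_{n}$ has a root $\rho\neq 0$.

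The first step is to show $A=x^{k}$. Rewriting the equation as $g_{n}(A(x)^{d})=A(B(x))$ and using (a) together with its elementary multiplicative analogue (if $P=x^{N}+p\,x^{N-m}+\cdots$ with $p\neq 0$, then $P^{j}=x^{jN}+jp\,x^{jN-m}+\cdots$), one reads off the ``top gap'' (the degree-drop from the leading term to the next nonzero one) of each side: writing $w=A(x)^{d}$, the left side equals $w^{d^{n-1}}+d^{n-1}c\,w^{d^{n-1}-1}+\cdots$, so the first deviation from the pure power $A(x)^{D}=w^{d^{n-1}}$ occurs in degree $kD-kd$, while the right side $B(x)^{k}+(\text{lower powers of }B)$ first deviates from $B(x)^{k}$ in degree $kD-eD$, where $e\ge 2$ is the top gap of $A$ (assuming $A$ is not already a monomial). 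Hence above degree $kD-\min(kd,eD)$ the polynomials $A(x)^{D}$ and $B(x)^{k}$ coincide; extracting the $D$-th root at infinity turns this into an identity $A(x)\equiv B(x)^{k/D}$ modulo an explicit low-order error, and — this is the step I expect to be the main obstacle — iterating this comparison together with the full equation, and using that $\gcd(k,D)=\gcd(k,d)=1$, should force every coefficient of $A$ below the leading one to vanish, i.e.\ $A=x^{k}$ (and then $B$ is a polynomial in $x^{d}$). This is an induction on the position of the lowest nonzero coefficient of $A$, carried out through Lemma~\ref{lem:gap}; coprimality of $\deg A$ with $d$ is exactly what makes the gap patterns on the two sides incompatible unless $A$ is a monomial.

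Finally, with $A=x^{k}$ and $k\ge 2$ the equation becomes $g_{n}(x^{kd})=B(x)^{k}$. The right side is a polynomial in $x^{kd}$, so comparing $B(\zeta x)^{k}$ with $B(x)^{k}$ for $\zeta$ a primitive $kd$-th root of unity shows every exponent of $B$ is $\equiv D\pmod{kd}$, hence $\equiv 0\pmod d$; thus $B(x)=E(x^{d})$ for some $E\in\C[x]$, and substituting $t=x^{d}$ yields the polynomial identity $g_{n}(t^{k})=E(t)^{k}$ in $\C[t]$. So $g_{n}(t^{k})$ is a perfect $k$-th power. But $g_{n}$ has a root $\rho\neq 0$ by (b), and the multiplicity of $\rho$ as a root of $g_{n}$ equals the order of vanishing of $f^{n-1}$ at $\rho+c$, which is a product of local degrees of $f$; since the only affine critical point of $f=x^{d}+c$ is $0$ (with local degree $d$), each such local degree is $1$ or $d$, so this multiplicity is a power of $d$, hence coprime to $k$. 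Consequently a $k$-th root of $\rho$ is a root of $g_{n}(t^{k})$ of multiplicity coprime to $k$, contradicting $g_{n}(t^{k})=E(t)^{k}$. Therefore $k=1$, i.e.\ $A$ is linear, as claimed.
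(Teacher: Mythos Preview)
Your Step~2 is correct and rather neat: once $A=x^{k}$, the identity $g_{n}(t^{k})=E(t)^{k}$ together with the observation that every nonzero root of $g_{n}(y)=f^{n-1}(y+c)$ has multiplicity a power of $d$ (since $c\neq 0$ makes all roots of $f$ simple and the only finite critical point of $f$ is $0$) gives a clean contradiction with $\gcd(k,d)=1$.

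The genuine gap is Step~1, and you yourself flag it as ``the main obstacle.'' The top-gap comparison you describe only yields that the gap $s$ of $B$ equals the gap $e$ of $A$, together with a single linear relation $k\mu^{k-1}b=Da$ between the second coefficients; it does \emph{not} force $a=0$. Going one step further brings in the next coefficient of $A$, the next coefficient of $B$, and binomial cross-terms from both $A(x)^{D}$ and $B(x)^{k}$ simultaneously, so there is no evident induction that isolates the coefficients of $A$ one at a time. What you are really trying to establish---that a semiconjugacy $f^{n}\circ A=A\circ B$ with $\gcd(\deg A,\deg f^{n})=1$ forces $f^{n}$, up to conjugacy, into the shape $x^{r}P(x)^{\deg A}$ (equivalently, after your normalisation, that $A$ is a monomial)---is essentially Ritt's second theorem in this context, and is not recoverable by a naive coefficient chase.

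For comparison, the paper does not attempt to pin down $A$ at all. It quotes a theorem of Inou to get directly that $f^{n}$ is linearly conjugate to $x^{r}P(x)^{\alpha}$ with $P(0)\neq 0$ and $\alpha=\deg A$; writing this conjugacy out using $f^{n}=(f^{n-1})^{d}+c$, it then applies the Mason--Stothers theorem to the resulting three-term identity to dispose of the case where the constant survives, and a divisibility argument ($x^{r}P(x)^{\alpha}$ cannot be a $d$-th power) for the remaining case. So the paper trades your hoped-for elementary induction for one structural black box (Inou) plus the polynomial $abc$ inequality. If your Step~1 could be completed it would give a more self-contained argument, but as written that step is not a proof.
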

	\begin{proof}
		Assume otherwise, let $\alpha:=\deg(A)\geq 2$. By  \cite[Theorem 8]{Inou}, we have
		that $f^n$ is linearly conjugate to 
		  $x^{r}(P(x))^{\alpha}$
		 for some $P(x)\in\C[x]\setminus x\C[x]$ and $r \equiv d^n$ modulo $\alpha$. Since $f^n$ is not linearly conjugate to
		 the power map, we have that $\deg(P)>0$. 
		  Write $F(x)=f^{n-1}(x)$,
		 and let $ux+w$ be a linear polynomial that conjugates $f^n=(F(x))^d+c$ to the above polynomial.
		 We have:
		 $$uF\left(\frac{x-w}{u}\right)^d+uc+w=x^r(P(x))^{\alpha}.$$
	   
	   Assume that $uc+w\neq 0$, then the Mason-Stothers theorem \cite[p.~194]{Lang} (an effective version of the \emph{abc}-conjecture over function fields) implies:
	   \begin{equation}\label{eq:abc1}
	   d^n\leq d^{n-1}+\deg(P).
	   \end{equation}
	   On the other hand, we have $r+\alpha \deg(P)=d^n$. Recall that
	   $\alpha\geq 2$, and $\gcd(\alpha,d)=1$ so that $\alpha\geq 3$ if $d=2$.
	   Hence \eqref{eq:abc1} implies that $\alpha \deg(P)\geq d^n$,
	   contradiction. So we must have $uc+w=0$. Hence $x^r(P(x))^{\alpha}$
	   is a $d$-th power. This is impossible since 
	   $\gcd(r,d)=1$ and $P(0)\neq 0$. 
	 \end{proof}
	 
	 Following \cite[Definition~2.3]{GTZ12}, we say that $g(x,y)$ is a \emph{Siegel factor} of the polynomial $f(x,y)$ if $g$ divides $f$ and the plane curve given by the equation $g(x,y)=0$  is irreducible, has geometric genus $0$, and has at most two points at infinity. The following is an immediate 
	 consequence
	 of \cite[Corollary 2.7]{GTZ12}:
	 \begin{lemma}\label{lem:Siegel factor}
	 Let $m$ and $D$ be coprime integers greater than 1. Let $\tilde{A}$ be a polynomial
	 of degree $D$. If $x^{m}-\tilde{A}(y)$ has a Siegel factor then
	 one of the following holds:
	 \begin{itemize}
	 	\item [(I)] There exist $P(x)\in\C[x]\setminus x\C[x]$, 
	 	$r\in\N$
	 satisfying $r\equiv D$ modulo $m$, and linear polynomial $L(x)$ such that
	 $\tilde{A}(x)=(x^rP(x)^m)\circ L(x)$.
	 	\item [(I')] There exist linear polynomials $L_1(x)$ and $L(x)$ such that
	 	$\tilde{A}(x)=L_1\circ x^D\circ L$. 
	 	\item [(II)] There exist linear polynomials $L_2(x)$, $L_1(x)$, and $L(x)$
	 	such that $x^m=L_1\circ T_m\circ L_2$ and $\tilde{A}=L_1\circ T_D\circ L$.
	 \end{itemize} 
	 Moreover, case (II) can only happen if $m=2$ and $L_1(x)$ 
	 has
	 the form $\ell_1x+2\ell_1$ for some $\ell_1\neq 0$.
	 \end{lemma}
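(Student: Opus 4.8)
The plan is to derive this lemma by specializing \cite[Corollary~2.7]{GTZ12} to the polynomial $x^m-\tilde{A}(y)$, with only a modest amount of bookkeeping, and to treat the ``moreover'' clause by a short self-contained computation. First I would record that, since $\gcd(m,D)=1$ with $m,D>1$, the polynomial $x^m-\tilde{A}(y)$ is automatically irreducible over $\C$ and the affine curve $x^m=\tilde{A}(y)$ has a single point at infinity; so ``having a Siegel factor'' means precisely that this curve has geometric genus $0$. Invoking \cite[Corollary~2.7]{GTZ12}, which describes all such configurations via Bilu--Tichy standard pairs, one obtains a polynomial $\psi$, linear polynomials $\lambda,\mu$, and a standard pair $(u,v)$ with $x^m=\psi\circ u\circ\lambda$ and $\tilde{A}=\psi\circ v\circ\mu$, where $\deg u=m$ and $\deg v=D$.

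The second step is the key simplification: $\deg\psi$ divides $\gcd(m,D)=1$, so $\psi$ and $\lambda$ are linear and $u$ is linearly equivalent to $x^m$, i.e.\ $u$ is a degree-$m$ polynomial totally ramified over exactly two points of $\P^1$. This rules out the standard pairs of the second, fourth, and fifth kinds, since each of those has both components of even degree, which would force $m$ and $D$ both even and contradict $\gcd(m,D)=1$. Only the first kind and the third (Dickson) kind survive. For a first-kind pair, the degree-$m$ component that is linearly equivalent to $x^m$ must be $x^m$ itself (or, in the $\deg p=0$ degeneration, a monomial); tracking the scaling that $x^m=\psi\circ u\circ\lambda$ forces on $\psi$, and absorbing into a single $m$-th power exactly those roots of $\tilde{A}$ whose multiplicity is divisible by $m$, one rewrites $\tilde{A}$ in the form $(x^rP(x)^m)\circ L$ of case~(I) with $r\equiv D\pmod m$ and $P\notin x\C[x]$ --- the pure-power degeneration being case~(I'). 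For a third-kind pair, which consists of Dickson polynomials, one rewrites these in terms of the Chebyshev polynomials $T_k$ (via $D_k(x,\beta)=\beta^{k/2}T_k(x/\sqrt{\beta})$) and absorbs the linear factors to reach case~(II), with linear $L_1,L_2,L$ satisfying $x^m=L_1\circ T_m\circ L_2$ and $\tilde{A}=L_1\circ T_D\circ L$.

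For the final assertion I would argue directly from $x^m=L_1\circ T_m\circ L_2$. Linear equivalence preserves the set of critical values in $\P^1$, so the number of critical values of $x^m$ and of $T_m$ must agree; but $x^m$ has exactly two critical values ($0$ and $\infty$), while for $m\ge 3$ the polynomial $T_m$ has the three critical values $2$, $-2$, $\infty$. Hence $m=2$. Writing $L_1(x)=\ell_1x+w$, $L_2(x)=ax+b$ (with $\ell_1a\ne0$) and using $T_2(x)=x^2-2$, the identity $x^2=\ell_1\bigl((ax+b)^2-2\bigr)+w$ forces $\ell_1a^2=1$, $2\ell_1ab=0$, $\ell_1b^2-2\ell_1+w=0$, hence $b=0$ and $w=2\ell_1$; that is, $L_1(x)=\ell_1x+2\ell_1$ with $\ell_1\ne0$.

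I expect the main obstacle to be the careful translation between the standard-pair normal forms produced by \cite[Corollary~2.7]{GTZ12} and the explicit shapes (I), (I'), (II): keeping track of leading and constant coefficients, carrying out the Dickson-to-Chebyshev renormalization so that the same linear $L_1$ appears in both resulting identities, and checking that one can indeed take the exponent $r$ in case~(I) with $r\equiv D\pmod m$ while preserving $P(0)\ne0$.
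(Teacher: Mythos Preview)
Your proposal is correct and follows essentially the same route as the paper: both arguments invoke \cite[Corollary~2.7]{GTZ12}, use $\gcd(m,D)=1$ to force the outer factor $\psi$ to be linear and to discard the even-degree standard pairs, leaving only the first-kind and Dickson/Chebyshev cases that yield (I), (I'), (II); and both handle the ``moreover'' clause by comparing the ramification of $x^m$ and $T_m$ to force $m=2$, followed by the same direct coefficient computation for $L_1$. Your version simply unpacks a bit more of the standard-pair bookkeeping than the paper does.
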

	 \begin{proof}
	 	By \cite[Corollary 2.7]{GTZ12} together with the fact that $\gcd(m,D)=1$,
	 	there exist linear polynomials $L$, $L_1$, $L_2$ and polynomials $F$ and $G$ 
	 	such that $x^m=L_1\circ F \circ L_2$, $\tilde{A}=L_1\circ G\circ L$ and
	 	one of the following holds:
	 	\begin{itemize}
	 		\item [(a)] $F(x)=x^m$ and $G(x)=x^rP(x)^m$ with $P$ satisfying the
	 		conditions in (I).
	 		\item [(a')] $G(x)=x^D$.
	 		\item [(b)] $F(x)=T_m(x)$ and $G(x)=T_D(x)$.
	 	\end{itemize}
	 	Note that cases (a) and (a') come from \cite[(2.7.1)]{GTZ12}, while
	 	case (b) comes from \cite[(2.7.3)]{GTZ12}. It is immediate that (I')
	 	and (II) follow from (a') and (b) respectively. Now assume that (a) holds.
	 	From $x^m=L_1\circ x^m\circ L_2$ and $m\geq 2$, we have that $L_1$ and $L_2$ has zero 
	 	constant
	 	coefficient. Now we ``absorb'' the leading coefficient of $L_1$ into $P(x)$
	 	and (I) follows. 
	 	
	 	For the moreover part, we have that the functional equation $x^m=L_1\circ T_m\circ L_2$
	 	could only hold when $m=2$ by comparing the ramification behavior of $x^m$ and $T_m$.
	 	Write $L_i(x)=\ell_ix+k_i$ for $i=1,2$. From $x^2=\ell_1 T_2(\ell_2 x+k_2)+k_1$,
	 	we have that $k_2=0$ and $k_1=2\ell_1$. Therefore $L_1(x)$ has the form
	 	$\ell_1x+2\ell_1$.
	 \end{proof}

   	The following result of Engstrom \cite[Lemma~3.2]{GTZ12}
   	will be used many times:
   	\begin{lemma}\label{lem:deg divides}
   		Let $A,B,C,D\in\C[x]\setminus \C$ such that 
   		$A\circ B=C\circ D$. 
   		\begin{itemize}
   		\item [(a)] If $\deg(A)\mid \deg(C)$ then
   		there exists a polynomial $P(x)\in \C[x]$ such that
   		$C=A\circ P$ and $B=P\circ D$.
   		\item [(b)] If $\deg(B)\mid \deg(D)$ then
   		there exists a polynomial $Q(x)\in \C[x]$
   		such that $D=Q\circ B$ and $A=C\circ Q$.
   		\end{itemize}
   	\end{lemma}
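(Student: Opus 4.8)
The plan is to deduce both (a) and (b) from a single statement about totally ramified places of function fields. First I would set $t:=A(B(x))=C(D(x))$ and view $K:=\C(t)$ as a subfield of $F:=\C(x)$; since the degree of a composition of polynomials is the product of the degrees, $[F:K]=\deg(A)\deg(B)=\deg(C)\deg(D)$. Because $A,B,C,D$ are non-constant polynomials, the place $x=\infty$ of $F$ is the unique place lying over the place $t=\infty$ of $K$, and its ramification index equals $[F:K]$; that is, it is totally ramified. Inside $F$ I single out the intermediate fields $M_1:=\C(B(x))$ and $M_2:=\C(D(x))$, and record that $[M_1:K]=\deg(A)$ while $[M_2:K]=\deg(C)$.

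The crux is the following claim: \emph{if $F/K$ admits a totally ramified place, then any two intermediate fields $M,M'$ with $[M:K]\mid[M':K]$ satisfy $M\subseteq M'$.} To prove it I would pass to the Galois closure $\widehat{F}$ of $F/K$, put $G:=\Gal(\widehat{F}/K)$, and write $F=\widehat{F}^{\,G_0}$, $M=\widehat{F}^{\,H}$, $M'=\widehat{F}^{\,H'}$ with $G_0\le H,H'\le G$. Fixing a place of $\widehat{F}$ over the totally ramified place, its inertia group $I\le G$ is cyclic (we are in characteristic $0$), and total ramification in $F/K$ forces $I\cdot G_0=G$, so $I$ acts transitively on $G/G_0$; hence the image $\bar I$ of $I$ in $\Sym(G/G_0)$ is a cyclic group acting faithfully and transitively, i.e.\ regularly, and $G/G_0\cong\bar I$ as $\bar I$-sets. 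Since $G_0\le H,H'$, the $G$-sets $G/H$ and $G/H'$ are quotients of $G/G_0$, hence $\bar I$-quotients of the regular $\bar I$-set, so they have the form $\bar I/L$ and $\bar I/L'$ for the \emph{unique} subgroups $L,L'$ of the cyclic group $\bar I$ of orders $|\bar I|/[M:K]$ and $|\bar I|/[M':K]$. From $[M:K]\mid[M':K]$ I obtain $|L'|\mid|L|$, hence $L'\subseteq L$; tracking the trivial coset, the fibre of $G/G_0\to G/H$ above it is identified both with $H/G_0$ and with $L$, and likewise $H'/G_0$ with $L'$, so $L'\subseteq L$ yields $H'\subseteq H$ and therefore $M=\widehat{F}^{\,H}\subseteq\widehat{F}^{\,H'}=M'$.

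Granting the claim, part (a) is immediate: $\deg(A)\mid\deg(C)$ says $[M_1:K]\mid[M_2:K]$, so $\C(B(x))\subseteq\C(D(x))$, i.e.\ $B(x)=R(D(x))$ for some $R\in\C(y)$; since $B$ and $D$ are polynomials, $R$ can have no finite pole (a finite pole of $R$ would force a finite pole of $R\circ D=B$), so $R=:P\in\C[y]$ and $B=P\circ D$, and then $A(P(D(x)))=C(D(x))$ with $D(x)$ transcendental over $\C$ forces $A\circ P=C$. For part (b), combining $\deg(B)\mid\deg(D)$ with $\deg(A)\deg(B)=\deg(C)\deg(D)$ gives $\deg(C)\mid\deg(A)$, i.e.\ $[M_2:K]\mid[M_1:K]$; the claim now yields $\C(D(x))\subseteq\C(B(x))$, hence $D=Q\circ B$ for a polynomial $Q$ by the same pole argument, and $C\circ Q=A$ since $B(x)$ is transcendental. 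I expect the only genuine obstacle to be the claim about totally ramified places (this is essentially Engstrom's lemma itself; one could alternatively prove it by an elementary induction comparing the leading coefficients of $A\circ B$ and $C\circ D$), while all the remaining steps are formal.
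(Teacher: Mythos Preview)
The paper does not prove this lemma; it simply cites it as a result of Engstrom appearing as \cite[Lemma~3.2]{GTZ12}. Your proof is correct and self-contained.

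A brief comparison: Engstrom's original argument (and the one in \cite{GTZ12}) is elementary, proceeding by direct manipulation of leading terms. Your Galois-theoretic route via the totally ramified place at infinity is more conceptual: it isolates the single reason the lemma works, namely that a cyclic group has a unique subgroup of each admissible order, so the lattice of intermediate fields of $\C(x)/\C(t)$ is totally ordered by degree. This viewpoint generalizes immediately (e.g.\ to rational functions sharing a totally ramified point), at the cost of invoking the Galois closure and inertia groups.

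One small point worth making explicit in your write-up: the identification you invoke is that the bijection $I\to G/G_0$, $i\mapsto iG_0$, carries $I\cap H$ onto $H/G_0$ (and likewise for $H'$); this uses $G_0\subseteq H$ and $IG_0=G$. Then $L'\subseteq L$ reads $I\cap H'\subseteq I\cap H$, and since $H=(I\cap H)G_0$ and $H'=(I\cap H')G_0$ (again from $IG_0=G$ and $G_0\subseteq H,H'$), you obtain $H'\subseteq H$. Your phrase ``tracking the trivial coset'' encodes exactly this, but spelling it out would make the argument airtight.
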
 
	 
	 We now prove Theorem~\ref{thm:1}:
	 \begin{proof}[Proof of Theorem~\ref{thm:1}]
	 	By contradiction, assume $(A,B)$ is a non-constant solution
	 	that does not have the desired form. Assume that $\deg(A)$ is \emph{minimal}
	 	among all counter-examples. 
	 	
	 	If $\gcd(\deg(A),d)=1$ then Lemma~\ref{lem:gcd 1} implies that $A$ is linear.
	 	Then we can write $A(x)=(x+c)\circ L$ and
	 	$B(x)=A^{-1}\circ f^n\circ A=L^{-1}\circ f_1\circ L$, violating
	 	the assumption that $(A,B)$ is a counter-example. If $d\mid \deg(A)$ then by Lemma~\ref{lem:deg divides} 
	 	 there is
	 	$\tilde{A}\in \C[x]$ such that 
	 	$A=f\circ \tilde{A}$ and
	 	$f^{n-1}\circ A=\tilde{A}\circ B$. Hence $f^n\circ \tilde{A}=\tilde{A}\circ B$.
	    Since $(A,B)$ is a counter-example, 
	 	the pair $(\tilde{A},B)$ is also a counter-example. However $\deg(\tilde{A})<\deg(A)$,
	 	contradicting the minimality of $\deg(A)$. Therefore we may assume that
	 	$\delta:=\gcd(\deg(A),d)$ satisfies $1<\delta<d$. 
 
 		Using $f(x)=(x^{\delta}+c)\circ x^{d/\delta}$ and Lemma~\ref{lem:deg divides}, 
 		we have that
 		$A=(x^{\delta}+c)\circ \tilde{A}$
 		and
 		\begin{equation}\label{eq:main}
 			x^{d/\delta}\circ f^{n-1}\circ (x^{\delta}+c)\circ \tilde{A}=\tilde{A}\circ B
 		\end{equation}
 		 for some polynomial $\tilde{A}$. 
 		Since $(A,B)$ is a counter-example, we have that $\tilde{A}$ is not 
 		linear. Put $D=\deg(\tilde{A})\geq 2$, we have that 
 		$\gcd(D,d/\delta)=1$ by the definition of $\delta$. Now Lemma~\ref{lem:Siegel factor}
 		gives that one of the cases (I), (I') or (II) occurs. 
 		
 		\textbf{Case (I):} there exist nonzero polynomial $P(x)$ such that $P(0)\neq 0$,
 		$r\in\N$ such that $r\equiv D$ modulo $d/\delta$, and linear polynomial $L$
 		such that $\tilde{A}=(x^rP(x)^m)\circ L$. Replace $\tilde{A}$ by $\tilde{A}\circ 
 		L^{-1}$
 		and $B$ by $L\circ B\circ L^{-1}$, we may assume that $\tilde{A}=x^rP(x)^m$. Now 
 		\eqref{eq:main} gives that $B(x)^rP(B(x))$ is the $(d/\delta)$-th power
 		of a polynomial. Since $P(0)\neq 0$ and $\gcd(r,d/\delta)$,
 		we have that $B(x)=(\tilde{B}(x))^{d/\delta}=x^{d/\delta}\circ \tilde{B}$
 		for some polynomial $\tilde{B}$.
 		
 		\textbf{Case (I'):} there exist linear polynomials $L_1(x)$ and $L(x)$
 		such that $\tilde{A}=L_1\circ x^D\circ L$. As before, replacing $\tilde{A}$
 		and $B$ by $\tilde{A}\circ L^{-1}$ and $L\circ B\circ L^{-1}$ respectively,
 		we may assume that $\tilde{A}=L_1\circ x^D=\ell_1x^D+k_1$. 
 		Equation \eqref{eq:main}
 		gives:
 		\begin{equation}\label{eq:I'}
 		\ell_1B(x)^D+k_1=Q(x)^{d/\delta}
 		\end{equation}
 		where $Q(x)=f^{n-1}\circ (x^\delta+c)\circ \tilde{A}$. Now assume $k_1\neq 0$. Then the Mason-Stothers Theorem
 	  gives:
 	  $\deg(B)+\deg(Q)-1\geq D\deg(B).$
 		Since $\deg(B)=d^n$ and $\deg(Q)=d^{n-1}\delta D$, the 
 		above inequality implies
 		$\frac{d}{\delta}+D>\frac{d}{\delta}D$
 		which is impossible since $\frac{d}{\delta}$ and $D$ are at least 2. Hence $k_1=0$.
 		By \eqref{eq:I'} and $\gcd(D,d/\delta)=1$ we have that 
 		$B(x)$ is a $(d/\delta)$-th power of a polynomial.
 		
 		Now both Case (I) and Case (I') give that $B(x)=x^{d/\delta}\circ \tilde{B}$
 		for some polynomial $\tilde{B}$. Composing $x^{d/\delta}$
 		to the right of both sides of the functional equation $f^n\circ A=A\circ B$,
 		we have:
 		\begin{equation}\label{eq:compose d/delta}
 		f^n\circ A\circ x^{d/\delta}=A\circ x^{d/\delta} \circ \tilde{B}\circ x^{d/\delta}.
 		\end{equation}
 		Since $d\mid \deg(A)d/\delta$, by Lemma~\ref{lem:deg divides} there is a polynomial $\hat{A}$
 		such that $A\circ x^{d/\delta}=f\circ \hat{A}$
 		and $f^{n-1}\circ A\circ x^{d/\delta}=\hat{A}\circ\tilde{B}\circ x^{d/\delta}$. Hence
 		$f^n\circ \hat{A}=\hat{A}\circ \tilde{B}\circ x^{d/\delta}$.  
 		Since $\deg(\hat{A})=\deg(A)/\delta=D<\deg(A)$ and the minimality of $\deg(A)$ among
 		all counter-examples to Theorem~\ref{thm:1}, there exist $m\in \N_0$,
 		a divisor $\eta$ of $d$, and a linear polynomial $U(x)$ such that
 		$\hat{A}=f^m\circ (x^{\eta}+c)\circ U$
 		and $\tilde{B}\circ x^{d/\delta}=U^{-1}\circ f_{\eta}^n\circ U$. Since $\deg(\hat{A})=D$
 		is coprime to $d/\delta$, we have that $m=0$ and hence $\eta=D\geq 2$. Now
 		$f_{\eta}^n$ is a polynomial in $x^{\eta}$ while
 		$\tilde{B}\circ x^{d/\delta}$ is a polynomial in $x^{d/\delta}$,
 		we have that the constant coefficient of $U(x)$ is zero. By Lemma~\ref{lem:gap}, $\eta$ is
 		the maximum number among all $M$ such that $U^{-1}\circ f_\eta^n\circ U$
 		is a polynomial in $x^M$. On the other hand
 		$U^{-1}\circ f_\eta^n\circ U=B\circ x^{d/\delta}$
 		is a polynomial in $x^{d/\delta}$; we must have that $d/\delta$ divides
 		$\eta=D$. This contradicts $\gcd(D,d/\delta)=1$. It remains to deal with
 		the following:
 		
 		\textbf{Case (II):} assume that $d/\delta =2$ and there exist linear polynomials
 		$L_1$ and $L_2$ such that $\tilde{A}=L_1\circ T_D\circ L$; moreover
 		$L_1$ has the form $\ell_1x+2\ell_1$ for some $\ell_1\neq 0$. As before,
 		replace $\tilde{A}$ by $\tilde{A}\circ L^{-1}$ and $B$ by
 		$L\circ B\circ L^{-1}$, we may assume $\tilde{A}=L_1\circ T_D=\ell_1T_D(x)+2\ell_1$.
 		Since $d/\delta=2$, equation
 		\eqref{eq:main} implies that 
 		 $\tilde{A}\circ B=\ell_1(T_D(B(x))+2)$ is the square of a polynomial. Since $\gcd(D,d/\delta)=\gcd(D,2)=1$,
 		 we have that 
 		$-2$ is a simple root of
 		$T_D(x)+2$. Hence we have that
 		$B(x)+2$ is the square of a polynomial. In other words, there is a 
 		polynomial $\tilde{B}$ such that $B(x)=(x^2-2)\circ \tilde{B}$.
 		We can proceed as in Case (I) and Case (I') as follows.
	  
	  Compose $x^2-2$ to the right of both sides of $f^n\circ A=A\circ B$,
	  we have:
	 	\begin{equation}\label{eq:compose C2}
	 		f^n\circ A\circ (x^2-2)=A\circ (x^2-2)\circ\tilde{B}\circ (x^2-2).
	 	\end{equation}
	 	Recall that $d/2=\delta=\gcd(d,\deg(A))$, hence $d\mid \deg(A\circ(x^2-2))$.
	 	By Lemma~\ref{lem:deg divides}, there is $\hat{A}\in\C[x]$ such that $A\circ (x^2-2)=f\circ\hat{A}$ and
	 	$f^{n-1}\circ A\circ (x^2-2)=\hat{A}\circ \tilde{B}\circ (x^2-2)$, hence $f^n\circ \hat{A}=\hat{A}\circ \tilde{B}\circ (x^2-2)$.
	 	Since $\deg(\hat{A})=D<\deg(A)$ and the minimality of $\deg(A)$ 
	 	among all counter-examples to Theorem~\ref{thm:1}, there exist
	 	$m\in \N_0$, a divisor $\eta$ of $d$, and a linear polynomial 
	 	$U(x)$ such that $\hat{A}=f^m\circ (x^\eta+c)\circ U$
	 	and $\tilde{B}\circ (x^2-2)=U^{-1}\circ f_{\eta}^n\circ U$.
	 	Argue as before, we have that $m=0$, $\eta=D\geq 2$, the constant coefficient
	 	of $U$ is zero, and that $2=d/\delta$ divides $\eta=D$, contradiction. 
	 \end{proof}


	\subsection{Proof of Theorem~\ref{theorem example}}
\label{the proof subsection}

	We have:
	\begin{lemma}\label{lem:is an iterate}
		Let $d\geq 2$ be an integer and let $c\in\C$ be such that
		$x^d+c$ is not linearly conjugate to $x^d$ or $\pm T_d(x)$. Let
		$\delta>1$ be a divisor of $d$ and let
		$g(x)=(x^{\delta}+c)^{d/\delta}$. We have the following:
		\begin{itemize}
		\item [(i)] $g(x)$ is not linearly conjugate to
		$x^d$ or $\pm T_d(x)$.
		\item [(ii)] If $G(x)$ is a polynomial of degree at least 2 having a common iterate
		with $g(x)$ then $G(x)$ is an iterate of $g(x)$.
		\end{itemize}
	\end{lemma}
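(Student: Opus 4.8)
The plan is to prove (i) from the ramification data of $g$, and (ii) from the structure of the compositional centralizer of a non‑exceptional polynomial together with two elementary coefficient computations specific to $g$. Begin with (i). If $\delta=d$ then $g=x^d+c$ and there is nothing to prove, so assume $1<\delta<d$. From $g'(x)=d\,x^{\delta-1}(x^\delta+c)^{d/\delta-1}$ one reads off the finite critical points of $g$: the point $0$, where $g$ has local degree $\delta$, and the $\delta$ roots of $x^\delta+c$ (pairwise distinct and nonzero because $c\ne 0$), at each of which $g$ has local degree $d/\delta$. The multiset of local degrees at the finite critical points is a linear‑conjugacy invariant. Since $x^d$ has a single finite critical point while $\pm T_d$ has exactly $d-1$, each of local degree $2$, we get $g\not\sim x^d$ at once, and $g\sim\pm T_d$ would force every local degree of $g$ to equal $2$, i.e. $\delta=2$ and $d/\delta=2$, hence $d=4$. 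In that last case $g(x)=(x^2+c)^2=x^4+2cx^2+c^2$ is monic with vanishing coefficient in degree $3$, so by the normal‑form criterion recalled in the introduction it is conjugate to $\pm T_4$ only if $\zeta^{-1}g(\zeta x)=x^4-4x^2+2$ for some cube root of unity $\zeta$; this forces $2c\zeta=-4$ and $\zeta^2c^2=2$, hence $4=c^2\zeta^2=2$, a contradiction. This settles (i).

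For (ii), suppose $G^a=g^b$ with $a,b\ge 1$ and $\deg G\ge 2$; we must show $G=g^k$ for some $k\ge 1$. Then $G$ commutes with $G^a=g^b$, and $g$ also commutes with $g^b$. By (i), $g$ is non‑exceptional; since $g^b$ has the same Julia set as $g$, and a polynomial is exceptional exactly when its Julia set is a circle or a line segment (Section~\ref{sym}), $g^b$ is non‑exceptional as well. By Ritt's theorem on commuting polynomials (see \cite{Medvedev-Scanlon} and the references therein), the compositional centralizer of a non‑exceptional polynomial of degree $\ge 2$ has a uniform shape: after conjugating by a suitable linear polynomial there are a polynomial $W$ with $\deg W\ge 2$ and a linear polynomial $\sigma$ commuting with $W$ such that every polynomial in the centralizer equals $\sigma^{r}\circ W^{s}$ for some $r,s\ge 0$. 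Since both $g$ and $G$ lie in the centralizer of $g^b$, we may therefore write $g=\sigma^{r_0}\circ W^{s_0}$ and $G=\sigma^{r_1}\circ W^{s_1}$ with $s_0,s_1\ge 1$ (an exponent $0$ would make the corresponding map linear).

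Two properties of $g=(x^\delta+c)^{d/\delta}$ now finish the argument. First, the only linear polynomial commuting with $g$ is the identity: if $\mu x+\nu$ commutes with $g$, comparing the degree‑$(d-1)$ coefficients of $g\circ(\mu x+\nu)$ and $(\mu x+\nu)\circ g$ --- the latter has none because $g$ is a monic polynomial in $x^\delta$ with $\delta\ge 2$, the former equals $d\mu^{d-1}\nu$ --- gives $\nu=0$, and then the constant terms give $\mu c^{d/\delta}=c^{d/\delta}$, hence $\mu=1$ since $c\ne 0$. This statement is conjugation invariant, so it persists after the linear change of coordinates above; as $\sigma$ commutes with $W$ it commutes with $g=\sigma^{r_0}\circ W^{s_0}$, forcing $\sigma=\mathrm{id}$, whence $g=W^{s_0}$ and $G=W^{s_1}$. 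Second, $g$ is not a proper compositional power of any polynomial: if $g=V^{\ell}$ with $\deg V=D\ge 2$ and $\ell\ge 2$, then $D^{\ell}=d$, and tracking leading terms with Lemma~\ref{lem:gap} --- using that $g$ is monic in $x^\delta$, has nonzero coefficient $\tfrac{d}{\delta}c$ in degree $d-\delta$, and has nonzero constant term $c^{d/\delta}$ --- one finds first that the highest non‑leading term of $V$ must sit in degree $D-\delta$ (so already $\delta\le D$), and then, pushing the remaining constraints down from the leading coefficient and up from the constant term, that $V=wx^D$ is a monomial; but then $g=V^{\ell}$ would have all non‑leading coefficients zero, contradicting $c\ne 0$. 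Applying this to $g=W^{s_0}$ gives $s_0=1$ and $W=g$, so $G=g^{s_1}$ with $s_1\ge 1$, which is (ii).

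The step I expect to require the most care is the second coefficient computation --- the compositional primitivity of $g$ --- especially the range $2\le\delta\le D$, where the ``first gap'' input of Lemma~\ref{lem:gap} does not by itself pin down $V$ and one must carry the bookkeeping through to the constant term (in the spirit of the explicit $d=4$ computation appearing in part (i)). The other ingredients --- the reduction to non‑exceptional polynomials, the appeal to Ritt's theorem, and the ``no nontrivial linear symmetry'' computation --- are routine.
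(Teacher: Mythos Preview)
Your overall architecture for (ii) is the same as the paper's: invoke the Ritt/Nguyen structure theorem for the set of polynomials commuting with an iterate of $g$, then show (a) the group of linear symmetries of $g$ is trivial and (b) $g$ is compositionally primitive. Your treatment of (i) and of (a) is correct and in fact a bit cleaner than the paper's (you kill the linear symmetry directly from the $x^{d-1}$ coefficient and from $g(0)=c^{d/\delta}\ne 0$, whereas the paper goes through the relation $g\circ L=L^{D}\circ g$).

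The gap is in (b), exactly where you flag it. Writing $g=V^{\ell}$ with $\ell\ge 2$ and $\deg V=D$, Lemma~\ref{lem:gap} indeed forces the second nonzero term of $V$ to sit in degree $D-\delta$, but nothing you have said controls the lower terms of $V$, and ``pushing the remaining constraints down from the leading coefficient and up from the constant term'' is not an argument. Concretely, for $d=36$, $\delta=4$, $\ell=2$, $D=6$ one has a priori $V=ax^{6}+bx^{2}+ex+f$, and only after comparing further coefficients of $V^{2}$ with those of $(x^{4}+c)^{9}$ does one see $e=f=0$; in general there is no reason the leading and constant coefficients alone suffice, and a systematic coefficient chase would require knowing that $V$ is itself a polynomial in $x^{\delta}$, which you have not established. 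The paper supplies the missing idea by repeatedly using Engstrom's decomposition lemma (Lemma~\ref{lem:deg divides}): first to show $V$ is a polynomial in $x^{\epsilon}$ with $\epsilon=\gcd(\delta,D)\ge 2$, then (after an argument that $\delta$ divides the maximal such exponent) to peel off $x^{\delta}+c$ on the right and obtain $x^{d/\delta}=V^{\ell-1}\circ P$, and finally once more to conclude $V=x^{D}\circ L_{1}$ is a monomial up to a linear, yielding the contradiction. That decomposition-lemma route is what replaces your unspecified coefficient computation.
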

	\begin{proof}
		If $\delta=d$, then $g(x)=x^d+c$ which is not linearly conjugate to $x^d$ or $\pm T_d(x)$, according to our hypothesis. If $\delta<d$, then the conclusion of part (i) follows from the ramification behavior of
		$x^d$ and $T_d(x)$. It remains to show part (ii).
		
		Let $\tilde{g}$ be a non-linear polynomial of minimal degree such that $\tilde{g}$ commutes with an iterate of $g$. Let $\Gamma$
		be the group of linear polynomials commuting with an iterate of $g$. By \cite[Proposition~2.3]{Nguyen} (also see \cite[Remark~2.4]{Nguyen}), the set of polynomials commuting with an iterate of 
		$g$ is exactly:
		$$\{\tilde{g}^m\circ L:\ m\geq 0,\ L\in \Gamma\}.$$
		It suffices to show that $\Gamma$ is trivial and $\tilde{g}=g$.
	
		From \cite[Proposition~2.3(d)]{Nguyen}, any
		two polynomials that commute with an iterate of 
		$g$ and have the same degree differ by either
		a ``post-composition'' or ``pre-composition''
		by an element of the finite cyclic group 
		$\Gamma$.
		Therefore if $L(x)=ax+b$ is a generator of $\Gamma$, we have 
		$g\circ L=L_1\circ g$ 
		for some $L_1\in\Gamma$, and $L_1=L^D$ for some
		positive integer $D$. By comparing the coefficients from both sides of $g\circ L=L^D\circ g$, we have that $b=0$. Hence $g(ax)=a^Dg(x)$. This implies
		that $a^d=a^D$ and $a^{d-\delta}=a^D$, hence $a$ is a $\delta$-th root of unity. This gives $g\circ L=g$, hence $g^m\circ L=g^m$ for every $m\in\N$. Therefore $\Gamma$ is trivial.
		
		From $g=\tilde{g}^m\circ L$ for some $m$ and $L\in\Gamma$,
		we have $g=\tilde{g}^m$. We assume that $m\geq 2$ and arrive at
		a contradiction. Write $\tilde{\delta}=\deg(\tilde{g})$
		and $\epsilon=\gcd(\delta,\tilde{\delta})$, we have
		$\epsilon\geq 2$ since $\tilde{\delta}^m=d$ is divisible by 
		$\delta$. From $\tilde{g}^m=g=x^{d/\delta}\circ (x^{\delta/\epsilon}+c)\circ x^{\epsilon}$ and Lemma~\ref{lem:deg divides}, we have
		that $\tilde{g}$ is a polynomial in $x^{\epsilon}$. Hence
		there is a maximal $\eta\geq 2$ 
		such that $\tilde{g}$ is a polynomial in $x^\eta$. By 
		Lemma~\ref{lem:gap}, $\eta$ is also the maximal
		number $M$ such that $\tilde{g}^m$ is a polynomial in $x^M$.
		Since $\tilde{g}^m=g$ is a polynomial in $x^{\delta}$,
		we have that $\delta$ divides $\eta$. By Lemma~\ref{lem:deg divides}, there is $P\in \C[x]$ such that
		$\tilde{g}=P\circ (x^{\delta}+c)$ and 
		$x^{d/\delta}=\tilde{g}^{m-1}\circ P.$
		By Lemma~\ref{lem:deg divides} and the fact that 
		$m\geq 2$, we have: 
		$\tilde{g}=x^{\tilde{\delta}}\circ L_1$
		for some linear $L_1\in \C[x]$. Since $\tilde{g}$ is a 
		polynomial in $x^{\delta}$ and $\delta>1$, we must have
		that the constant coefficient of $L_1$ is zero. Hence 
		$\tilde{g}$
		is a monomial, so is $g=\tilde{g}^m$ which yields a contradiction. Therefore $m=1$, $g=\tilde{g}$, and every
		polynomial $G$ commuting with an iterate of $g$ must be
		an iterate of $g$.
	\end{proof}
	
	In view of Theorem~\ref{main result}, Theorem~\ref{theorem example}
	follows from the following:
    
    \begin{proposition}
	 	Let $d_1\geq 2$ and $d_2\geq 2$ be integers. 
	 	Let $F(x)=x^{d_1}+c_1$ and $G(x)=x^{d_2}+c_2$ be 
	 	polynomials that are not linearly conjugate
	 	to $\pm T_d(x)$ or $x^d$. Then there are positive integers
	 	$\ell_1$ and $\ell_2$ such that the self-map 
	 	$F^{\ell_1}\times G^{\ell_2}$
	 	of $(\bP^1)^2$ has a non-trivial periodic curve
	 	if and only if $d_1=d_2$ and $c_1=\zeta c_2$ for a $(d_1-1)$-th 
	 	root of 
	 	unity $\zeta$.
	 \end{proposition}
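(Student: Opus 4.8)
The plan is to prove both implications. For the ``if'' direction: if $d_1=d_2=:d$ and $c_1=\zeta c_2$ for some $(d-1)$-st root of unity $\zeta$, then the affine automorphism $\eta(x)=\zeta x$ of $\bP^1$ satisfies $\eta^{-1}\circ F\circ\eta=G$ (a one-line computation using $\zeta^{d-1}=1$), so the line $C=\{(x,\eta^{-1}(x)):x\in\bP^1\}$ is fixed by $F\times G$ and projects isomorphically onto both coordinates; hence $\ell_1=\ell_2=1$ works.

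For the ``only if'' direction, let $C$ project dominantly onto both coordinates and be periodic under $F^{\ell_1}\times G^{\ell_2}$; replacing the $\ell_i$ by suitable multiples I may assume $C$ is fixed by $(x,y)\mapsto(F^{a}(x),G^{b}(y))$ for some $a,b\ge 1$. Since $c_i\ne 0$ and $c_i\ne -2$ when $d_i=2$, the maps $F$ and $G$, hence all of their iterates (this is detected by the Julia set, which iteration preserves), are not exceptional. By the Medvedev--Scanlon classification of invariant curves for polynomial dynamical systems \cite{Medvedev-Scanlon}, $C$ is rational; writing $\nu\colon\bP^1\to C$ for the normalization, $p,q\colon\bP^1\to\bP^1$ for the two coordinate projections of $C$, and $\Psi\colon\bP^1\to\bP^1$ for the induced self-map, one has $p\circ\Psi=F^{a}\circ p$ and $q\circ\Psi=G^{b}\circ q$, and comparing degrees gives $\deg\Psi=\deg F^{a}=\deg G^{b}$. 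Next I reduce to polynomials: the fibre $p^{-1}(\infty)$ is nonempty, finite, and $\Psi$-invariant, and a local-degree analysis of $p\circ\Psi=F^{a}\circ p$ at its points---using $\deg\Psi=\deg F^{a}$ and that $F^{a}$ is not exceptional---shows, after replacing $\Psi$ by an iterate, that $p^{-1}(\infty)$ is a single point at which $\Psi$ is totally ramified. Taking this point to be $\infty$ makes $p$ and $\Psi$ polynomials, and the same argument at $q^{-1}(\infty)$ makes $q$ (and $\Psi$) polynomials with respect to a possibly different coordinate.

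Now Theorem~\ref{thm:1}, applied with $(f,n)=(F,a)$ to $F^{a}\circ p=p\circ\Psi$, yields $\Psi=L^{-1}\circ F_\delta^{a}\circ L$ for some divisor $\delta\mid d_1$ and linear $L$, where $F_\delta:=(x^{\delta}+c_1)^{d_1/\delta}$; applied with $(f,n)=(G,b)$ to $G^{b}\circ q=q\circ\Psi$ it yields $\Psi=(L')^{-1}\circ G_{\delta'}^{b}\circ L'$ with $\delta'\mid d_2$ and $G_{\delta'}:=(x^{\delta'}+c_2)^{d_2/\delta'}$. The M\"obius change of coordinate between the two presentations conjugates the polynomial $F_\delta^{a}$ to the polynomial $G_{\delta'}^{b}$; since neither is conjugate to a monomial, each has $\infty$ as its unique totally ramified fixed point, so this M\"obius map fixes $\infty$ and is therefore affine, i.e.\ $F_\delta^{a}$ and $G_{\delta'}^{b}$ are linearly conjugate.

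To conclude, each of $F_\delta$, $G_{\delta'}$ is not exceptional: this is the hypothesis when the subscript equals $d_i$, follows from conjugacy with $F$ (respectively $G$) when it equals $1$, and is Lemma~\ref{lem:is an iterate}(i) otherwise. So, by Lemma~\ref{lem:is an iterate}(ii) (using its conjugacy with $G$ when $\delta'=1$), every polynomial of degree $\ge 2$ having a common iterate with $G_{\delta'}$ is an iterate of $G_{\delta'}$, and likewise for $F_\delta$. Writing $F_\delta^{a}=M\circ G_{\delta'}^{b}\circ M^{-1}$ with $M$ affine and putting $P=M^{-1}\circ F_\delta\circ M$, we get $P^{a}=G_{\delta'}^{b}$, hence $P=G_{\delta'}^{k}$ for some $k\ge 1$ and $d_1=d_2^{k}$; symmetrically $d_2=d_1^{j}$, whence $j=k=1$, $d_1=d_2=:d$, and $F_\delta$ is affinely conjugate to $G_{\delta'}$. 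Comparing the number of finite critical points---which equals $1$ when the subscript is $1$ or $d$, and $1+\delta$ (at least $3$) when $1<\delta<d$---forces either $\delta,\delta'\in\{1,d\}$ (so $F\sim F_\delta\sim G_{\delta'}\sim G$) or $1<\delta=\delta'<d$; in both cases, writing the conjugating affine map as $x\mapsto ax+b$ and noting $b=0$ because neither polynomial has an $x^{d-1}$-term, one reads off $a^{d-1}=1$ and $c_1=\zeta c_2$ with $\zeta$ a power of $a$, hence a $(d-1)$-st root of unity, as required. The main obstacle is the geometric input---extracting the parametrized picture and the two semiconjugacy relations from the Medvedev--Scanlon classification, and reducing $p,q,\Psi$ to polynomials; granting that, the decomposition endgame is essentially forced by Theorem~\ref{thm:1} and Lemma~\ref{lem:is an iterate}.
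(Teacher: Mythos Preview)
Your overall strategy is the same as the paper's: obtain two polynomial semiconjugacies $F^{a}\circ A_1=A_1\circ B$ and $G^{b}\circ A_2=A_2\circ B$, feed each into Theorem~\ref{thm:1}, conclude that some $F_\delta^{a}$ and $G_{\delta'}^{b}$ are linearly conjugate, and finish with Lemma~\ref{lem:is an iterate} plus a coefficient comparison. Your endgame is correct and only cosmetically different (you count finite critical points to get $\delta=\delta'$, whereas the paper reads this off directly from $(x^{\delta}+c_1)^{d/\delta}=\frac{1}{u}((ux)^{\eta}+c_2)^{d/\eta}$ after first reducing to $\delta,\eta\ge 2$ via $F_1\sim F$, $G_1\sim G$).

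The genuine gap is the step you yourself flag as ``the main obstacle'': reducing $p,q,\Psi$ to polynomials. Your ``local-degree analysis'' does not do this. Concretely, if $S=p^{-1}(\infty)=\{a,b\}$ with $\Psi(a)=a$, $\Psi(b)=b$, write $e_t$ for the local degree of $p$ at $t$ and $f_t$ for that of $\Psi$; the relation $p\circ\Psi=F^{a}\circ p$ gives $e_{\Psi(t)}f_t=n\,e_t$ at each $t\in S$, and since $\Psi^{-1}(S)=S$ forces $f_a=f_b=n$, these equations are automatically satisfied for any $e_a,e_b$. So local degrees alone do not exclude $|S|=2$; you would still need to show that a same-degree semiconjugacy $p\circ x^{n}=F^{a}\circ p$ with $p$ a genuine Laurent polynomial forces $F$ to be exceptional, which is a separate (measure-theoretic or Ritt-type) argument you have not supplied.

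The paper sidesteps all of this by invoking \cite[Proposition~2.34]{Medvedev-Scanlon} directly: from the periodic curve one gets, for some $n\ge 1$, nonconstant \emph{polynomials} $A_1,A_2,B$ with $F^{\ell_1 n}\circ A_1=A_1\circ B$ and $G^{\ell_2 n}\circ A_2=A_2\circ B$. That single citation replaces your normalization, your local-degree step, and your M\"obius-to-affine reduction, and lands you exactly where Theorem~\ref{thm:1} applies. If you replace your middle paragraph with this citation, the rest of your argument goes through and matches the paper's.
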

	 \begin{proof}
	  If $d_1=d_2$ and $c_1=\zeta c_2$  for a $(d_1-1)$-th
	  root of unity then there is a linear polynomial $L$
	  such that $F=L\circ G\circ L^{-1}$. The curve $\{(L(x),x):\ x\in\bP^1\}$ is invariant under $F\times G$. It remains to prove the 
	  ``only if'' part. 
	  
	  For every positive divisor $\delta$ (respectively $\eta$) of $d_1$ (respectively $d_2$), denote 
	  $F_\delta(x)=(x^\delta+c_1)^{d_1/\delta}$ (respectively
	  $G_\eta(x)=(x^\eta+c_2)^{d_2/\eta}$). 
	  
	  By Medvedev-Scanlon \cite[Proposition~2.34]{Medvedev-Scanlon}, there is a positive integer $n$ and non-constant polynomials $A_1$, $A_2$,
	  and $B$ 
	  such that $F^{\ell_1n}\circ A_1=A_1\circ B$ and $G^{\ell_2n}\circ A_2=A_2\circ B$. By Theorem~\ref{thm:1}, 
	  there exist positive divisors $\delta$ of $d_1$ and $\eta$ of 
	  $d_2$ such that
	  $F_{\delta}^{\ell_1n}$ and $G_{\eta}^{\ell_2n}$
	  are linearly conjugate to each other.
	  Since $F_1(x)=(x+c_1)^{d_1}$ and $G_1(x)=(x+c_2)^{d_2}$ are
	  linearly conjugate to $F(x)=F_{d_1}(x)$ and $G(x)=G_{d_2}(x)$ respectively, it
	  suffices to prove the following.
	  
	  \emph{Claim: if there exist a positive integer $n$ and positive divisors $\delta$ of $d_1$ and $\eta$ of $d_2$
	  such that $2\leq \min\{\delta,\eta\}$
	  and $F_{\delta}^{\ell_1n}$ and $G_{\eta}^{\ell_2n}$ are 
	  linearly conjugate to each other then $d_1=d_2$ and $c_1=\zeta c_2$
	  for some $(d_1-1)$-th root of unity $\zeta$}. 
	  
	  Let $U(x)$ be a linear polynomial such that
	  $F_{\delta}^{\ell_1n}=(U^{-1}\circ G_{\eta}\circ U)^{\ell_2n}$.
	  By Lemma~\ref{lem:is an iterate}, we have that 
	  $U^{-1}\circ G_{\eta}\circ U$ is an iterate of
	  $F_{\delta}$ and $U\circ F_{\delta}\circ U^{-1}$ is
	  an iterate of $G_{\eta}$. This gives $d_1=d_2$ (hence 
	  $\ell_1=\ell_2$) and $F_{\delta}=U^{-1}\circ G_{\eta}\circ U$.
	  Write $U(x)=ux+w$ and $d:=d_1=d_2$, we have that $w=0$; otherwise the
	  coefficient of $x^{d-1}$ in $U^{-1}\circ G_{\eta}\circ U$
	  is nonzero while the coefficient of $x^{d-1}$ in $F_{\delta}$ is zero.
	  Hence we have:
	  $(x^{\delta}+c_1)^{d/\delta}=\frac{1}{u}((ux)^{\eta}+c_2)^{d/\eta}.$
	  It follows immediately that $\delta=\eta$ and $u^{d-1}=1$. Compare the
	  coefficients of $x^{d-\delta}$ on both sides and note that $\delta=\eta$, we have:
	  $$\frac{d}{\delta}c_1=\frac{1}{u}\frac{d}{\delta}u^{d-\delta}c_2.$$
	  Since $u$ is a $(d-1)$-th root of unity, we finish the proof of
	  the proposition.
	  \end{proof}


\subsection{Proof of Theorem~\ref{same poly acting}}
\label{conclude subsection}
Let $\tilde{f}$ be a non-linear polynomial of minimal degree commuting with an iterate of $f$ and let $G$ be the
group of linear polynomials commuting with an iterate of $f$.
It is well-known from Ritt's theory that 
$\tilde{f}$ and $f$ have a common iterate and 
$$\{\tilde{f}^m\circ L\colon m\ge 0\text{, } L\in G\}=\{L\circ \tilde{f}^m\colon m\ge 0\text{, } L\in G\}$$
which is exactly the set of polynomials commuting with an iterate of $f$ (see, for example,
\cite[Proposition~2.3]{Nguyen}).

Now assume that $C$ is an irreducible curve in $\bP^1\times\bP^1$ that is neither horizontal nor vertical and contains 
infinitely many $\Phi$-preperiodic points. By Theorem~\ref{main result}, $C$ is preperiodic under $\Phi$, hence
under the self-map $\tilde{\Phi}(x,y):=(\tilde{f}(x),\tilde{f}(y))$ of $\bP^1\times\bP^1$.
Let $N\geq 0$ be such that $\tilde{\Phi}^N(C)$ is 
periodic under $\tilde{\Phi}$. By Medvedev-Scanlon 
\cite[Theorem~6.24]{Medvedev-Scanlon}, $\tilde{\Phi}^N(C)$
is given by the equation
$y=g(x)$ or $x=g(y)$ where $g$ commutes with an iterate
of $f$. Assume we have the equation $x=g(y)$ and write
$g=\ell\circ \tilde{f}^M$, then
$C$ satisfies the equation
$\tilde{f}^{N}(x)=\ell(\tilde{f}^{M+N}(y))$.
Similarly, assume we have the equation $y=g(x)$ for
$\tilde{\Phi}^N(C)$ and write
$g=\ell\circ\tilde{f}^M$, then $C$ satisfies
the equation $\tilde{f}^{M+N}(x)=\ell^{-1}(\tilde{f}^N(y))$. 

Conversely, if $C$ satisfies an equation
of the form $\tilde{f}^n(x)=L(\tilde{f}^m(y))$ where $L$ commutes with an iterate of $f$ then for every
$(\alpha,\beta)\in C$, $\alpha$ is $f$-preperiodic if
and only if $\beta$ is $f$-preperiodic. This gives that $C$
contains infinitely many $\Phi$-preperiodic points.


\end{document}